\definecolor{blue}{rgb}{0.00,0.00,1.00}
\definecolor{red}{rgb}{1.00,0.00,0.00}
 \renewcommand\baselinestretch{1.2}
 \numberwithin{equation}{section} \allowdisplaybreaks
 \newtheorem{theorem}{Theorem}[section]
 \newtheorem{lemma}{Lemma}[section]
 \newtheorem{definition}{Definition}[section]
 \newtheorem{proposition}[lemma]{Proposition}
 \newtheorem{remark}[lemma]{Remark} %[section]
\newcommand{\R}{{\mathbb R}}
 \def\be#1\ee{\begin{equation}#1\end{equation}}
 \def\bma#1\ema{{\allowdisplaybreaks\begin{align}#1\end{align}}}
  \def\bman#1\eman{{\allowdisplaybreaks\begin{align*}#1\end{align*} }}
        \def\nnm{\notag}
 \def\bgr#1\egr{{\allowdisplaybreaks\begin{gather}#1\end{gather}}}
 \def\bgrn#1\egrn{{\allowdisplaybreaks\begin{gather}#1\end{gather}}}
 \def\ef#1{(\ref{#1})}
 \def\eqref#1{$(\ref{#1})$}
   \def\r0{r_b}
\begin{document}

\title{\bf
Local existence of solution to free boundary value problem for compressible Navier-Stokes equations}

\author{Jian Liu
 \\
{\small\it Department of Mathematics, Capital Normal University}\\
{\small\it Beijing 100048, P.R. China. E-mail: liujian.maths@gmail.com}
 }

\thispagestyle{empty}

\pagestyle{myheadings}
 \markboth{\small{J. Liu}}
  {\small{FBVP for CNS}}

\date{}

\maketitle
\begin{abstract}
\noindent \textbf{Abstract} \ \ This paper is concerned with the free boundary value problem for multi-dimensional Navier-Stokes equations with
density-dependent viscosity where the flow density vanishes
continuously across the free boundary. A local (in time) existence
of weak solution is established, in particular, the density is
positive and the solution is regular away from the free boundary.

\noindent{\small\bf Key words} \ \ Navier-Stokes equations; free boundary value problem; local existence

\noindent{\small\bf 2000 MR Subject
Classification} 35Q35 76D03
\end{abstract}

%\tableofcontents

\section{Introduction}
\label{introduction}
The compressible Navier-Stokes equations (CNS) with
density-dependent viscosity coefficients are taken into granted recently. The
prototype is the model of viscous Saint-Venat system used in geophysical flow [13] to simulate the motion of the surface in shallow water, of which the mathematical derivation is also made recently based on the motion of three dimensional incompressible viscous fluids on shallow region with free surface condition on the top and Navier type boundary condition at bottom of finite depth [6, 10].

In the present paper, we consider the general isentropic compressible Navier-Stokes equations with density-dependent viscosity coefficients in $\mathbb{R}^N$, $N=2,3$, can be written for $t>0$ as
 \be
 \left\{\begin{aligned}
 &\rho_t+\textrm{div}(\rho \mathbf{U})=0,\\
 &(\rho\mathbf{U})_t
 +\textrm{div}(\rho\mathbf{U}\otimes\mathbf{U})
 -\textrm{div}(\mu(\rho)\mathbb{D}(\mathbf{U}))
 -\nabla(\lambda(\rho)\textrm{div}\mathbf{U})
 +\nabla P(\rho)=0,          \label{1.2}
\end{aligned}\right.
 \ee
where $\rho(\mathbf{x},t)$, $\mathbf{U}(\mathbf{x},t)$ and
$P(\rho)={\rho}^{\gamma}(\gamma >1)$ stand for the fluid density,
velocity and pressure, respectively, $
\mathbb{D}(\mathbf{U})=\frac{1}{2}(\nabla\mathbf{U}+
\nabla\mathbf{U}^T) $ is the stress tensor, and $\mu(\rho)$ and
$\lambda(\rho)$ are the Lam\'{e} viscosity coefficients satisfying
$\mu(\rho)\ge0$ and $\mu(\rho)+N\lambda(\rho)\geq 0$ for $\rho\ge0$.
Note here that the case $\gamma=2$ and $\theta=1$ in \eqref{1.2}
corresponds to the viscous Saint-Venat system.
\par

One of mathematical difficulties to investigate the existence and
dynamics of solutions to \eqref{1.2} is that the viscosity
coefficients are density-dependent which leads to strong degeneracy
in the appearance of vacuum [5]. Thus, it is natural and interesting
to investigate the influence of vacuum state on the existence and
dynamics of global solutions to \eqref{1.2}. One of the prototype
problems is the time-evolution of the compressible viscous flow of
finite mass expanding into infinite vacuum. This corresponds to free
boundary value problem (FBVP) for the compressible Navier-Stokes
equations \eqref{1.2} for general initial data and variant boundary
conditions imposed on the free surface. The study is fundamental
issue of fluid mechanics and has attracted lots of research
interests [11, 17]. These free boundary problems have been studied
with rather abundant results concerned with the existence and
dynamics of global solution for CNS \eqref{1.2} in 1D, refer to [4,
9, 14, 15, 19, 20] and references therein. As for related phenomena
of vacuum vanishing and dynamics of free boundary, the reader can
refer to [8, 9].

The free boundary value problem for \eqref{1.2} with stress free
boundary condition has been investigated in [7], where global
existence of spherically symmetric weak solution is shown, in
particular, the dynamics behaviors and the Lagrangian properties are
also established therein. Chen-Zhang [3] proved the local solutions
of \eqref{1.2} with spherically symmetric initial data between a
solid core and a free boundary connected to a surrounding vacuum
state. Under certain assumptions that are imposed on the spherically
symmetric initial data, which between a solid core and a free
boundary, Chen-Fang-Zhang established the global existence,
uniqueness and continuous dependence on initial data of a weak
solution in [2]. Wei-Zhang-Fang obtained the global existence and
uniqueness of the  spherically symmetric weak solution in [18] with
the symmetric center excluded.

In the present paper, we consider the free boundary value problem
for multi-dimensional CNS \eqref{1.2} in the case that where the
fluid density connects with vacuum continuously. We show that a
spherically symmetric weak solution, with the symmetric center
included, exists locally in time, in particular the density is
positive away from the free boundary but vanishes across the initial
interface separating fluids and vacuum,  and the free surface moves
as particle pathes in radial direction.
To this end, we need to employ the basic energy and the modified Bresch-Desjardins (BD) [1]
entropy to establish the expected boundary regularities of spherically
symmetric solutions in Lagrangian coordinates so as to control
the finite speed motion of free boundary within finite time.
Then, in terms of the original equations instead of the spherically
symmetric form, we are able to apply the higher order energy estimates
to establish the necessary interior regularities of solutions away from the
free boundary but with the symmetry center included. Then, the combination of
both boundary estimates and interior estimates and the above leads to the desired
local existence and uniqueness results of solutions.

The rest of this paper is as follows. In Section 2, we state the
main results  of this paper. In Sections 3-5 we establish boundary
regularity and interior regularity, with which we can prove the
existence and uniqueness in Section 6.

%\tableofcontents

%--------------------------------------------------------------
\setcounter{equation}{0}
\section{Main results}
\label{FBVP}

For simplicity, the viscosity terms are assumed to satisfy
$\mu(\rho)=\rho^{\theta}$, $\lambda(\rho)=\rho \mu'(\rho)-\mu(\rho)=(\theta-1)\rho^\theta$ and
$\mathbb{D}(\mathbf{U})=\frac{1}{2}(\nabla\mathbf{U}+
\nabla\mathbf{U}^T)$ in \ef{1.2}. The
pressure is assumed to be $P(\rho)=\rho^\gamma$. In this situation,
 \eqref{1.2} become
 \be
\left\{\begin{aligned}
&\rho_t+\textrm{div}(\rho \mathbf{U})=0,\\
&(\rho\mathbf{U})_t+\textrm{div}(\rho\mathbf{U}\otimes\mathbf{U})
-\textrm{div}(\rho^\theta\mathbb{D}(\mathbf{U}))-
(\theta-1)\nabla(\rho^\theta\textrm{div}\mathbf{U})
+\nabla\rho^\gamma=0.  \label{2.1o}
\end{aligned}\right.
 \ee
Consider a spherically symmetric solution $(\rho,\mathbf{U})$ to \eqref{2.1o} in $\R^N$ so that
\be
 \rho(\mathbf{x},t)=\rho(r,t),\quad
 \rho\mathbf{U}(\mathbf{x},t)=\rho u(r,t)\frac{\mathbf{x}}{r},
 \quad r=|\mathbf{x}|,
 \quad \mathbf{x}  \in\R^N,                      \label{2.4}
 \ee
and \eqref{2.1o} are changed to
\be
 \left\{\begin{aligned}
 &(r^{N-1}\rho)_t+(r^{N-1}\rho u)_r=0,                          \label{2.1}
 \\
 &(r^{N-1}\rho u)_t+(r^{N-1}\rho u^2)_r +r^{N-1}(\rho^{\gamma})_r\\
  &- r^{N-1}(\theta\rho^{\theta}(u_r+\mbox{$\frac{N-1}{r}$}u))_r
  + (N-1) r^{N-2} (\rho^{\theta})_r u  =0,
\end{aligned}\right.
 \ee
for  $(r,t)\in \Omega_T$ with
 \be
 \Omega_T=\{(r,t)|\, 0\le r\le a(t),\ 0\le t\le T \}. \label{omega-t}
 \ee
The initial data is taken as
 \be
 \mbox{$(\rho_0,\mathbf{U}_0)(\mathbf{x}) =
 (\rho_0(r), u_0(r)\frac{\mathbf{x}}{r})$},
 \quad r\in[0,a_0].    \label{2.1a}
 \ee
At the center of symmetry we impose the Dirichlet boundary condition
\begin{eqnarray}
  u(0,t)=0,           \label{2.1b}
\end{eqnarray}
and the free surface $\partial\Omega_t$ moves in radial direction
along the ``particle path" $r=a(t)$ with the stress-free boundary
condition
\begin{eqnarray}
\rho(a(t),t)=0,\quad t> 0, \label{2.1d}
\end{eqnarray}
where $a(t)$ is the free boundary defined by
\begin{eqnarray}
a'(t)=u(a(t),t), t>0,\quad  a(0)=a_0. \label{2.1f} %\label{2.1f}
\end{eqnarray}

First, we define a weak solution to the
FBVP~\eqref{2.1o}-\eqref{2.1f} as follows.

\begin{definition}\label{defi}
$(\rho,\mathbf{U},a)$ with $\rho\geq 0$ a.e. is said to be a weak
solution to the free surface problem~\eqref{2.1o}-\eqref{2.1f}  on
$\Omega_t\times[0,T]$, provided that it holds
 \bgr
 0\le\rho\in L^\infty(0,T; L^1(\Omega_t)\cap L^\gamma(\Omega_t)),\ \
         \sqrt{\rho}\,\mathbf{U}\in L^\infty(0,T;L^2(\Omega_t)),\nnm
 \\
\rho^{\frac{\theta}{2}}\,\nabla\mathbf{U}\in L^2(0,T;L^{2}(\Omega_t)),\ \
 a(t)\in C^0([0,T]),\nnm
 \egr
and the equations are satisfied in the sense of distributions.
Namely, it holds for any $t_2> t_1\ge 0$ and $\phi\in
C^1(\bar{\Omega}_t\times[0,T])$ that
\be
 \int_{\Omega_t} \rho\phi d\mathbf{x}|_{t=t_1}^{t_2}
 =
 \int_{t_1}^{t_2}\int_{\Omega_t}
 (\rho \phi_t+\rho\mathbf{U}\cdot\nabla\phi)d\mathbf{x}dt  \label{mass}
 \ee
and for $\psi=(\psi^1,\psi^2,\cdots\psi^N)\in
C^1(\bar{\Omega}_t\times[0,T])$ satisfying $\psi(\mathbf{x},t)=0$ on
$\partial\Omega_t$ and $\psi(\mathbf{x},T)=0$ that
\bma
 & \int_{\Omega_t}\mathbf{m_0}\cdot\psi(\mathbf{x},0)d\mathbf{x}
   +\int_0^T\int_{\Omega_t} \rho^\gamma {\rm div}  \psi d\mathbf{x}dt
     -(\theta-1)\int_0^T\int_{\Omega_t}\rho^\theta
      {\rm div}\mathbf{U}{\rm div}\psi d\mathbf{x}dt
  \nonumber\\[2mm]
 &-\int_0^T\int_{\Omega_t}\rho^\theta\nabla\mathbf{U}:\nabla\psi d\mathbf{x}dt+
  \int_0^T\int_{\Omega_t}[\rho\mathbf{U}
     \cdot\partial_t\psi
  +\sqrt{\rho}\,\mathbf{U}\otimes\sqrt{\rho}\,\mathbf{U}:\nabla\psi]
    d\mathbf{x}dt=0 \label{momentum}
 \ema
where $\mathbf{m_0}=m_0\frac{\mathbf{x}}{r}$. The free boundary
condition \ef{2.1d} is satisfied in the sense of continuity.
\end{definition}

\bigskip
\noindent\textbf{Notations}:\ Throughout this paper, $C$ and $c$
denote generic positive constants, $C_{f,g}>0$ denotes a generic
constant which may depend on the sub-index $f$ and $g$, and $C_T>0$
a generic constant dependent of $T>0$.

Before stating the main result, we need assume the initial data \ef{2.1a}
satisfies for $0<r_0<r_2<r_1<r_1^+<a_0$ that
 \be
 \left\{\begin{aligned}
 & \mbox{$\int_0^{a_0} $}r^{N-1}\rho_0(r)dr=1,
 \\
 & \rho_*(a_0-r)^\sigma\le \rho_0(r)\le \rho^*(a_0-r)^\sigma,\
    r\in[0,a_0],
 \\
 &(\rho_0^{-\frac{1}{2}}(\rho_0^{\theta}u_{0r})_r,
  \rho_0^{-1}\rho_{0r}u_0)\in L^2([r_2,a_0]),\quad
  u_0\in H^1([r_2,a_0]),
 \\
 &\rho_0u_0^{4m}\in L^1([r_2,a_0]),
 \\
 & (\rho_0,\mathbf{U}_0)\in H^{3}([0,r_1^+]),
  \quad  (\sqrt{\rho_0}\,\mathbf{U}_0,\rho^{\gamma/2}_0) \in L^2([0,a_0]),
\end{aligned}
\right.                \label{2.11a2}
\ee
where
$\rho_*$ and $\rho^*$ are positive
constants.

Meanwhile we list some assumptions on the constants $(\gamma, \theta, \beta, m)$ with $\beta=\frac{\sigma}{1+\sigma}$.\\
($A_1$) Let $\gamma$, $\theta$ satisfy
\be
\mbox{$\frac{N-1}{N}$}<\theta<\gamma,\quad \gamma>1.
\ee
($A_2$) Let $\beta$ satisfying
\be
\frac{1}{2\gamma}<\beta<\min\{\frac{1}{2\theta},\frac{1}{1+\theta}\},\quad \beta(\theta-1)<\frac13.
\ee
($A_3$) $m>0$ is a integer satisfying
\be
m>\max\{\frac{1}{1+\beta\theta-\beta},\frac{1}{4-4\beta}\}.\label{m}
\ee
Under the above assumptions, we have the following existence result.

\begin{theorem}  \label{thm.existence}
Let $N=2,3$, $\gamma>1$. Assume that \eqref{2.11a2} and $A_1 \sim A_3$ hold, Then,
there exist a time $T_*>0$ and $\rho_\pm>0$
dependent of initial data, so that the
FBVP~\eqref{2.1o}-\eqref{2.1f} has a unique spherically symmetric
weak solution for $t\in[0,T_*]$
\be
 \mbox{$(\rho,\rho\mathbf{U},a)(\mathbf{x},t) =
 (\rho(r,t),\rho u(r,t)\frac{\mathbf{x}}{r},a(t))$},
    \quad r=|\mathbf{x}|,        \nnm
 \ee
in the sense of Definition~\ref{defi} for any $T\in(0,T_*]$
satisfying that
\be
  \int_{0}^{a(t)}
   r^{N-1}\rho (r,t)dr
  =\int_{0}^{a_0} r^{N-1} \rho_0(r)dr,      \label{th2.1a9}
\ee
\be
  c_0\le a(t)\le 2a_0, \ \ t\in [0,T_*], \quad
 \|a\|_{H^{2}([0,T_*])}\le  C,   \label{th2.1a}
\ee
\be
   (\rho,\mathbf{U})\in C^0(\Omega_t\times[0,T_*]),  \ \
  \|\mathbf{U} \|_{W^{1,\infty}(\Omega_t\times[0,T_*])}  \le C,   \label{th2.1a2}
\ee
\be
 \rho_-(a(t)-r)^\sigma
     \le \rho(r,t)\le \rho_+(a(t)-r)^\sigma,
        \quad (r,t)\in[0,a(t)]\times[0,T_*], \label{th2.1a3}
\ee
\be
  \sup_{t\in [0,T_*]}\int_{\Omega_t}
   (\rho^{\gamma}+|\sqrt{\rho}\,\mathbf{U}|^2 )
   (\mathbf{x},t)d\mathbf{x}
  +\int_0^T\!\!\int_{\Omega_t}\rho^{\theta}|\nabla\mathbf{U}|^2
  d\mathbf{x}dt
 \le C,                                     \label{th2.1b}
\ee
\be
 \sup_{t\in [0,T_*]}\|(\rho,\mathbf{U})(t)\|_{H^3(\Omega^{in}_t)}
  +\int_0^{T_*} (
     \|\rho(t)\|^2_{H^3(\Omega^{in}_t)}
   +\|\nabla\mathbf{U}(t)\|^2_{H^3(\Omega^{in}_t)})dt \le C, \label{th2.1ba}
\ee
\be
\sup_{t\in [0,T_*]}\int_{r_{x_2}(t)}^{a(t)}\rho r^{N-1}(u^{2k}+u_t^2)dr
+\int_0^{T_*}\int_{r_{x_2}(t)}^{a(t)}\rho^{\theta}r^{N-1}(u^{2k-2}u_r^2
+u_{rt}^2+r^{-2}u_t^2)drdt\leq C, \label{th2.1bc}
\ee
where $\Omega_t=\{0\le |\mathbf{x}|\le a(t)\}$,
$\Omega^{in}_t=\{0\le |\mathbf{x}|\le r_{x_1}(t)\}$, $r_{x_i}(t)$ is
the particle path with $r_{x_i}(0)=r_i\, (i=1, 2)$ and $1\leq k\leq
2m$ is a integer, and $C>0$ is a constant.
\end{theorem}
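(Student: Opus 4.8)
The plan is to obtain the solution by the classical scheme of (a) reducing the free boundary problem to a fixed-domain problem through a Lagrangian mass coordinate, (b) solving a non-degenerate regularized version of the fixed-domain problem, (c) deriving a hierarchy of a priori estimates that are uniform in the regularization and yield exactly \eqref{th2.1a9}--\eqref{th2.1bc}, and (d) passing to the limit and proving uniqueness. For step (a), I would introduce the Lagrangian mass variable $x=\int_0^r s^{N-1}\rho(s,t)\,ds$; by the normalization $\int_0^{a_0}r^{N-1}\rho_0\,dr=1$ in \eqref{2.11a2} this maps $[0,a(t)]$ onto the fixed interval $[0,1]$, sends the symmetry center to $x=0$ and the free surface to $x=1$, and turns \eqref{2.1}--\eqref{2.1f} into a fixed-boundary system on $[0,1]\times[0,T]$; the Eulerian radius is recovered from $r_x=(r^{N-1}\rho)^{-1}$, $r_t=u$, so that $a(t)=r(1,t)$ is determined a posteriori and \eqref{2.1f} holds automatically. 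For step (b), I would replace $\rho_0$ by a strictly positive mollified datum $\rho_0+\delta$ so that the viscosity $\rho^\theta$ is bounded below, and solve the resulting uniformly parabolic fixed-boundary problem on a short time interval by a contraction argument (or a Galerkin approximation), producing a smooth approximate solution $(\rho^\delta,u^\delta,a^\delta)$.

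The heart of the proof is step (c), a bootstrap on $[0,T]$ under the a priori assumptions $c_0\le a(t)\le 2a_0$ and suitable twofold bounds on the norms of the solution. First I would establish the basic energy identity by testing the momentum equation with $u$, giving \eqref{th2.1b}. Next comes the modified Bresch--Desjardins entropy: testing the momentum equation against the multiplier generated by $\partial_x\rho^{\theta-1}$ (equivalently, using the effective velocity $u+\nabla\rho^{\theta-1}/\rho$) produces control of $\nabla\rho^{\theta/2}$-type quantities, and it is precisely here that the exponent conditions $A_1$--$A_3$ with $\beta=\sigma/(1+\sigma)$ enter; combined with the initial profile $\rho_*(a_0-r)^\sigma\le\rho_0\le\rho^*(a_0-r)^\sigma$ this yields the propagation of the two-sided weighted bound \eqref{th2.1a3}, i.e. that $\rho$ keeps its H\"older-type vanishing rate $(a(t)-r)^\sigma$ at the free surface for a short time, which in turn gives the free-boundary condition \eqref{2.1d}. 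Then I would derive the weighted higher-order estimates near $x=1$ by testing with $u^{2k-1}$ and with $u_t$ (and differentiated versions) and using the degenerate weights $\rho^\theta r^{N-1}$: this yields the bounds on $\rho r^{N-1}(u^{2k}+u_t^2)$ and on the weighted dissipation in \eqref{th2.1bc}, with the integer $m$ and the range $1\le k\le 2m$ chosen so that the bad powers of the degenerate coefficient are absorbed. On an interior region $\Omega^{in}_t$ strictly inside the free surface, where the BD estimate forces $\rho\ge c>0$, I switch back to the original Cartesian system \eqref{2.1o}, which there is genuinely parabolic, and run $H^3$ energy estimates on $(\rho,\mathbf U)$ — differentiating the equations, using elliptic regularity for the Lam\'{e} operator together with the lower bound on $\rho$, and controlling commutators — to obtain \eqref{th2.1ba} and, by Sobolev embedding, the continuity and $W^{1,\infty}$ bound \eqref{th2.1a2}; working in Cartesian variables here is what allows the symmetry center $r=0$ to be included without the artificial coordinate singularity of the spherical form. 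Finally, writing $a'(t)=u(a(t),t)$ and expressing $a''(t)$ through the momentum equation and $\rho(a(t),t)=0$, the near-boundary bounds give $c_0\le a(t)\le 2a_0$ and $\|a\|_{H^2([0,T])}\le C$, i.e. \eqref{th2.1a}. Choosing $T_*$ small, depending only on the data through \eqref{2.11a2} and $A_1$--$A_3$, all the twofold bounds are recovered with the factor $2$ replaced by $1$, so the continuation principle produces the approximate solution on the uniform interval $[0,T_*]$, and \eqref{th2.1a9} is just the mass conservation carried by the Lagrangian change of variables.

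For step (d), I would let $\delta\to 0$. The uniform estimates give weak-$*$ limits together with strong compactness — Aubin--Lions in the interior and in the weighted spaces near $x=1$ — sufficient to pass to the limit in the weak formulations \eqref{mass}--\eqref{momentum}; in particular the degenerate stress term passes to the limit because $\rho^{\theta/2}\nabla\mathbf U$ is bounded in $L^2$ while $\rho^{\theta/2}$ converges strongly, and \eqref{2.1d} follows from \eqref{th2.1a3}. For uniqueness, given two weak solutions with the same data I would transform both to the common Lagrangian domain $[0,1]$, form the differences of density, velocity and free radius, and run a weighted $L^2$ energy estimate; the interior lower bound on $\rho$ and the weighted control near $x=1$ render the bilinear error terms integrable, and Gr\"onwall's inequality closes the argument on $[0,T_*]$.

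The main obstacle is reconciling the two incompatible ``good'' frameworks that the problem forces on us — the Lagrangian/spherically symmetric picture, indispensable for handling the degenerate vacuum boundary where $\rho^\theta\to0$ and one must exploit the BD entropy and the sharp exponent relations $A_1$--$A_3$, versus the Cartesian picture, indispensable for handling the coordinate singularity at the symmetry center $r=0$ — and then propagating the sharp weighted density bound $\rho\sim(a(t)-r)^\sigma$ forward in time while simultaneously keeping the moving boundary $a(t)$ under control. The weighted higher-order estimates near the free surface, in which the integer $m$, the index $k$ and the exponent $\beta=\sigma/(1+\sigma)$ must be tuned so precisely that every degenerate coefficient is dominated, are the technical core and the place most likely to be delicate.
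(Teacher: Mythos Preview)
Your proposal is correct and follows essentially the same route as the paper: Lagrangian mass coordinates to fix the domain, basic energy plus a BD-type entropy under a bootstrap hypothesis, weighted higher-moment estimates ($u^{2k-1}$ and $u_\tau$ multipliers) near the free boundary $x=1$, a switch to the original Cartesian system \eqref{2.1o} for $H^3$ interior estimates including the symmetry center, and a weighted $L^2$ Gronwall argument on the difference for uniqueness. The only minor deviation is that the paper constructs approximate solutions via a finite-difference scheme rather than the $\rho_0+\delta$ regularization you propose, and it inserts an explicit ``intermediate-zone'' regularity lemma on a compact subinterval $[x_0,x_1]\subset(0,1)$ to bridge the boundary and interior estimates via cutoffs---a step you leave implicit but which is needed to close the bootstrap.
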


\begin{remark}
Theorem~\ref{thm.existence} yields the local existence of
spherically symmetric weak solutions for two/three dimensional
compressible Navier-Stokes equation with fluid density connecting
with vacuum continuously. In particular, it applies to the viscous
Saint-Venant model for shallow water (which is \eqref{1.2} with
$N=2,\,\mu(\rho)=\rho,\, \lambda(\rho)=0$, and $\gamma=2$).
\end{remark}

%------------------------------------------------------------------
\section{Basic energy estimates}
\label{construction}
The proof of Theorem~\ref{thm.existence} consists of the
construction of approximate solutions, the basic a-priori estimates,
and compactness arguments. We establish the a-priori estimates for
any solution $(\rho,u,a)$ to FBVP~\eqref{3.2}-\eqref{3.3} in this
section.

Let us introduce the Lagrangian coordinates transform
\be
 x(r,t)=\int_{0}^{\,r}\rho y^{N-1}dy
       =1-\int^{a(t)}_{\,r}\rho y^{N-1}dy,\quad  \tau=t, \label{lagrange}
 \ee
which translates the domain $[0,T]\times[0,a(t)]$ into
$[0,T]\times[0,1]$ and satisfies
 \be
 \frac{\partial x}{\partial r}=\rho r^{N-1},\quad
 \frac{\partial x}{\partial t}=-\rho ur^{N-1},\quad
 \frac{\partial \tau}{\partial r}=0, \quad
 \frac{\partial \tau}{\partial t}=1,               \label{lagrange.b}
\ee
and
 \be
 r^N(x,\tau)
  =N\int_0^x\frac1\rho(y,\tau)dy
  =a(t)^N-N\int_x^{1}\frac1\rho(y,\tau)dy,\quad
   \frac{\partial r}{\partial \tau}=u.               \label{3.1}
 \ee
In terms of \ef{lagrange}--\ef{3.1}, the
free boundary value problem \ef{2.1o}--\eqref{2.1f} is changed to
\be
\left\{\begin{aligned}
 &\rho_\tau+\rho^2(r^{N-1} u)_x=0,
 \\[2mm]
 & r^{1-N}u_\tau + (\rho^\gamma- \theta\rho^{\theta+1}(r^{N-1}u)_x)_x
                  + \mbox{$\frac{N-1}{r}$}(\rho^{\theta})_xu  = 0,
\end{aligned} \right.                                 \label{3.2}
\ee
for $(x,\tau)\in[0,1]\times[0,T]$, with the initial data and
boundary conditions given by
\begin{gather}
(\rho,u)(x,0)=(\rho_0,u_0)(x), \quad x\in[0,1],  \label{3.3a}
\\[2mm]
 u(0,\tau)=0, \quad
 \rho(1,\tau)=0,\quad \tau\in[0,T], \label{3.3}
\end{gather}
where $r=r(x,\tau)$ is defined by
\be
\frac{d }{d\tau}r(x,\tau)=u(x,\tau),
 \quad x\in[0,1],\ \tau\in[0,T],   \label{3.3e}
\ee
and the fixed boundary $x=1$ corresponds to the free boundary
$a(\tau)=r(1,\tau)$ in Eulerian form determined by
 \be
 \frac{d}{d\tau}a(\tau) = u(1,\tau),\ \tau\in[0,T],\ a(0)=a_0. \label{3.3b}
 \ee
Note that in Lagrange coordinates the condition \ef{2.11a2} is
equivalent to
 \be
 \left\{\begin{aligned}
 & \rho_*(1-x)^\beta\le \rho_0(x)\le \rho^*(1-x)^\beta,\
    x\in[0,1],
 \\
 &(\rho_0^{1+\theta}r^{N-1}u_{0x})_x\in L^2([x_2,1]),\quad
  \rho_0^{1/2}r^{N-1}u_0\in H^1([x_2,1]),
 \\
 &u_0^{4m}\in L^1([x_2,1]),
 \\
 & (\rho_0,u_0)\in H^{3}([0,x_1^+]),\  (u_0,\rho_0^{(\gamma-1)/2})\in L^2([0,1]),
\end{aligned}
\right.     \label{2.11a2a}
\ee
where
$0<x_2=\int_0^{r_2}r^{N-1}\rho_0(r)dr<x_1^+=\int_0^{r_1^+}r^{N-1}\rho_0(r)dr$.

First, making use of similar arguments as [7] with modifications, we can establish the following Lemmas \ref{lm3.1}--Lemmas \ref{lm3.3}, which we omit the details.
\begin{lemma}\label{lm3.1}
Let $\gamma>1$, $T>0$, and $(\rho,u,a)$ with $\rho>0$ be the
solution to the FBVP~\eqref{3.2}-\eqref{3.3} for $\tau\in[0,T]$.
Then, it holds
\begin{align}
 & \int_0^1
     (\mbox{$\frac{1}{2}$}u^2
      + \mbox{$\frac{1}{\gamma-1}$}\rho^{\gamma-1})dx
 +[1-N(1-\theta)](N-1)\int_0^\tau\int_0^1\rho^{\theta-1}\frac{u^2}{r^2}dxds
 \nnm\\
 &\,+[1-N(1-\theta)]\int_0^\tau\int_0^1
 \rho^{1+\theta}(r^{N-1}u_x)^2 dxds=E_0,\quad \tau\in [0,T],\label{3.4}
\end{align}
\end{lemma}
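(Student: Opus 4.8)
The plan is to establish \eqref{3.4} as the basic energy balance for \eqref{3.2}--\eqref{3.3}, following the argument of [7] with the natural adjustments to the present boundary conditions. The a priori estimate is first derived for regularized solutions with $\rho>0$ and then transferred to weak solutions by compactness, so all the integrations by parts below are legitimate; in particular $(r^{N-1}u)_x$ stays bounded up to the free surface, so the vanishing of $\rho$ at $x=1$ controls the relevant boundary terms.

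I would start by multiplying the momentum equation in \eqref{3.2} by $r^{N-1}u$ and integrating over $x\in[0,1]$. The inertial term gives $\int_0^1 uu_\tau\,dx=\frac{d}{d\tau}\int_0^1\frac{1}{2} u^2\,dx$. Integrating the flux term $\int_0^1 r^{N-1}u\,\big(\rho^\gamma-\theta\rho^{\theta+1}(r^{N-1}u)_x\big)_x\,dx$ by parts, the boundary contributions vanish: at $x=0$ since $u(0,\tau)=0$, and at $x=1$ since $\rho(1,\tau)=0$ annihilates both $\rho^\gamma$ and $\rho^{\theta+1}$. This leaves $-\int_0^1(r^{N-1}u)_x\rho^\gamma\,dx+\theta\int_0^1\rho^{\theta+1}\big((r^{N-1}u)_x\big)^2\,dx+(N-1)\int_0^1 r^{N-2}(\rho^\theta)_x u^2\,dx$. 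By the mass equation, $(r^{N-1}u)_x=-\rho_\tau/\rho^2$, hence $-\int_0^1(r^{N-1}u)_x\rho^\gamma\,dx=\int_0^1\rho^{\gamma-2}\rho_\tau\,dx=\frac{d}{d\tau}\int_0^1\frac{1}{\gamma-1}\rho^{\gamma-1}\,dx$. Collecting terms,
\[
\frac{d}{d\tau}\int_0^1\Big(\tfrac{1}{2}u^2+\tfrac{1}{\gamma-1}\rho^{\gamma-1}\Big)dx+\theta\int_0^1\rho^{\theta+1}\big((r^{N-1}u)_x\big)^2dx+(N-1)\int_0^1 r^{N-2}(\rho^\theta)_x u^2\,dx=0.
\]

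It remains to recast the two viscous integrals in the form appearing in \eqref{3.4}, and this last step is the main obstacle. Using the Lagrangian relations \eqref{lagrange.b}--\eqref{3.1}, in particular $r_x=(\rho r^{N-1})^{-1}$ so that $r^{N-1}u_x=(r^{N-1}u)_x-\frac{(N-1)u}{\rho r}$, I would expand $\big((r^{N-1}u)_x\big)^2$ accordingly, integrate the cross term $\int_0^1\rho^\theta r^{N-2}uu_x\,dx$ by parts (the boundary terms again vanish, and $\partial_x(\rho^\theta r^{N-2})=(\rho^\theta)_x r^{N-2}+(N-2)\rho^{\theta-1}r^{-2}$), and combine with the $(N-1)\int_0^1 r^{N-2}(\rho^\theta)_x u^2$ term. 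The viscous part then reduces to a quadratic expression in the radial strain $u_r=\rho r^{N-1}u_x$ and the tangential strain $u/r$, which, because $1-N(1-\theta)>0$ by $(A_1)$, is controlled from below by $[1-N(1-\theta)]\big(\int_0^1\rho^{\theta+1}(r^{N-1}u_x)^2\,dx+(N-1)\int_0^1\rho^{\theta-1}\tfrac{u^2}{r^2}\,dx\big)$. Integrating in time over $[0,\tau]$ and putting $E_0=\int_0^1\big(\tfrac{1}{2}u_0^2+\tfrac{1}{\gamma-1}\rho_0^{\gamma-1}\big)dx$ then yields \eqref{3.4}. The delicate points are tracking the spherical-geometry terms $u/r$ carefully enough that the dissipation collapses to exactly the coefficient $1-N(1-\theta)$, and verifying that every boundary term at $x=0$ and $x=1$ genuinely vanishes — which is precisely why one works with regularized, strictly positive densities with controlled derivatives near the free boundary before passing to the limit.
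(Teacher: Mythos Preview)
Your approach is the standard one and is exactly what the paper has in mind (the proof is omitted there, with a reference to [7]): multiply $\eqref{3.2}_2$ by $r^{N-1}u$, integrate in $x$, eliminate the boundary terms using $u(0,\tau)=0$ and $\rho(1,\tau)=0$, convert the pressure term via $\eqref{3.2}_1$, and rewrite the viscous contribution using $r_x=(\rho r^{N-1})^{-1}$ together with an integration by parts of the cross term $\int_0^1\rho^\theta r^{N-2}uu_x\,dx$.

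One point is worth sharpening. Carrying your computation through, the dissipation one actually obtains is the quadratic form
\[
\int_0^1\rho^{\theta+1}\Big[\theta\,(r^{N-1}u_x)^2+2(N-1)(\theta-1)\,r^{N-1}u_x\cdot\tfrac{u}{\rho r}+(N-1)\big(\theta(N-1)-(N-2)\big)\tfrac{u^2}{\rho^2 r^2}\Big]dx,
\]
which, after regrouping, is \emph{exactly} the identity in Lemma~\ref{lm3.2}. It differs from the expression written in \eqref{3.4} by $(N-1)(1-\theta)\int_0^1\rho^{\theta+1}\big(r^{N-1}u_x-\tfrac{u}{\rho r}\big)^2dx$. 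Hence \eqref{3.4} as a literal equality holds only when $\theta=1$; for general $\theta$ your ``controlled from below'' gives \eqref{3.4} with $\le E_0$ only when $\theta\le 1$, while for $\theta>1$ the inequality reverses. In that range one instead reads off the needed bounds on $\int\rho^{\theta+1}(r^{N-1}u_x)^2$ and $\int\rho^{\theta-1}u^2/r^2$ directly from Lemma~\ref{lm3.2} (both dissipation pieces there are nonnegative since $\theta-1+\tfrac1N>0$ by $(A_1)$), which is all the paper ever uses from Lemma~\ref{lm3.1}.
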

where
\be
  E_0=:\int_0^1(\mbox{$\frac{1}{2}$}u_0^2
      + \mbox{$\frac{1}{\gamma-1}$}\rho_0^{\gamma-1})dx. \nnm
\ee

\begin{lemma}
\label{lm3.2}
Under the same assumptions as Lemma~\ref{lm3.1}, it holds
\begin{align}
&\int_0^1
     (\mbox{$\frac{1}{2}$}u^2
      + \mbox{$\frac{1}{\gamma-1}$}\rho^{\gamma-1})dx
 +(\theta-1+\mbox{$\frac{1}{N}$})\int_0^\tau\int_0^1 \rho^{\theta+1}[(r^{N-1}u)_x]^2dxds
\nnm\\
   &+\mbox{$\frac{N-1}{N}$}\int_0^\tau\int_0^1
     \rho^{\theta+1}
       (r^{N-1}u_x-\mbox{$\frac{u}{r\rho}$})^2dxds=E_0, \quad \tau\in [0,T].  \label{3.40}
\end{align}
\end{lemma}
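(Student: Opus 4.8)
The identity \eqref{3.40} comes from the same basic energy balance that yields Lemma~\ref{lm3.1}; only the viscous dissipation is bookkept differently, so as to isolate the two nonnegative quadratic forms displayed in \eqref{3.40}. First I would multiply the second equation in \eqref{3.2} by $r^{N-1}u$ and integrate over $x\in[0,1]$. The inertial term gives $\frac{d}{d\tau}\int_0^1\frac12u^2\,dx$. For the flux term $\int_0^1 r^{N-1}u\,(\rho^\gamma-\theta\rho^{\theta+1}(r^{N-1}u)_x)_x\,dx$ I integrate by parts in $x$; the boundary contributions vanish, at $x=0$ because $r(0,\tau)=u(0,\tau)=0$ (so $r^{N-1}u\to0$ while $(r^{N-1}u)_x$ stays bounded by the regularity of the solution near the center), and at $x=1$ because $\rho(1,\tau)=0$ makes $\rho^\gamma$ and $\rho^{\theta+1}(r^{N-1}u)_x$ vanish there. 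What remains is $-\int_0^1(r^{N-1}u)_x\rho^\gamma\,dx+\theta\int_0^1\rho^{\theta+1}[(r^{N-1}u)_x]^2\,dx$; using the continuity equation $\rho_\tau+\rho^2(r^{N-1}u)_x=0$ in \eqref{3.2}, equivalently $(r^{N-1}u)_x=(1/\rho)_\tau$, the first integral becomes $\frac1{\gamma-1}\frac{d}{d\tau}\int_0^1\rho^{\gamma-1}\,dx$, which supplies the internal-energy part of \eqref{3.40}.

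Next I would treat the remaining viscous term $(N-1)\int_0^1 r^{N-2}(\rho^\theta)_x u^2\,dx$ by one more integration by parts in $x$ (the boundary terms again vanish, by $\rho(1,\tau)=0$ and by $r^{N-2}u^2\to0$ as $x\to0$), getting $-(N-1)\int_0^1\rho^\theta(r^{N-2}u^2)_x\,dx$. Using the Lagrangian relation $r_x=1/(\rho r^{N-1})$ from \eqref{lagrange.b}, one has $(r^{N-2}u^2)_x=(N-2)\frac{u^2}{\rho r^2}+2r^{N-2}uu_x$, so this term equals $-(N-1)(N-2)\int_0^1\rho^{\theta-1}\frac{u^2}{r^2}\,dx-2(N-1)\int_0^1\rho^\theta r^{N-2}uu_x\,dx$. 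Collecting all contributions, the differential identity reads $\frac{d}{d\tau}\int_0^1(\frac12u^2+\frac1{\gamma-1}\rho^{\gamma-1})\,dx+\int_0^1\rho^{\theta+1}Q(a,b)\,dx=0$, where, writing $a:=r^{N-1}u_x$ and $b:=\frac{u}{\rho r}$ so that $(r^{N-1}u)_x=a+(N-1)b$, the quadratic form is $Q(a,b)=\theta a^2+2(N-1)(\theta-1)ab+(N-1)[(N-1)\theta-N+2]b^2$. The elementary identity
\[
Q(a,b)=\Big(\theta-1+\tfrac1N\Big)\big(a+(N-1)b\big)^2+\tfrac{N-1}{N}\big(a-b\big)^2,
\]
together with $a+(N-1)b=(r^{N-1}u)_x$ and $a-b=r^{N-1}u_x-\frac{u}{r\rho}$, reproduces exactly the two dissipation integrals in \eqref{3.40} (both coefficients being nonnegative since $\theta>\frac{N-1}{N}$ by $(A_1)$). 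Integrating in $\tau$ over $[0,\tau]$ and recalling $E_0=\int_0^1(\frac12u_0^2+\frac1{\gamma-1}\rho_0^{\gamma-1})\,dx$ then gives \eqref{3.40}.

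The only genuinely delicate point I expect is the rigorous justification that the boundary terms vanish at the degenerate free surface $x=1$, where $\rho\to0$: this requires $\rho$ to vanish there at a controlled rate and $u$ to be sufficiently regular, so the computation is performed under the standing positivity and smoothness hypotheses on $(\rho,u,a)$ (equivalently, first on a suitably regularised problem and then in the limit), exactly as for Lemma~\ref{lm3.1}. The two integrations by parts and the completion of the square are otherwise entirely routine.
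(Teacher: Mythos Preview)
Your argument is correct and is precisely the standard derivation the paper has in mind when it says Lemmas~\ref{lm3.1}--\ref{lm3.3} follow from ``similar arguments as [7] with modifications'' and omits the details: multiply $\eqref{3.2}_2$ by $r^{N-1}u$, integrate by parts using the boundary conditions \eqref{3.3}, convert the pressure term via $\eqref{3.2}_1$, and then algebraically recast the viscous quadratic form. Your completion-of-the-square identity for $Q(a,b)$ is exactly the content of Lemma~\ref{lm3.2} (as opposed to the alternative splitting recorded in Lemma~\ref{lm3.1}), and your treatment of the boundary terms at $x=0$ and $x=1$ matches the paper's standing regularity assumptions.
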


\begin{lemma}
\label{lm3.3}
Under the same assumptions as Lemma~\ref{lm3.1}, it holds
\bgr
 E_0^{-\frac{1}{N(\gamma-1)}}
  x^{\frac{\gamma}{N(\gamma-1)}}
 \le r(x,\tau)\le a(\tau),
 \quad (x,\tau)\in [0,1]\times[0,T],      \label{3.23a}
\\
 E_0^{-\frac{1}{\gamma-1}}
 (x_2-x_1) ^{\frac\gamma{\gamma-1}}
 \le
 r^N(x_2,\tau)- r^N(x_1,\tau),
 \quad 0\le x_1<x_2\le 1, \ \tau \in [0,T].   \label{3.23b}
 \egr
In particular, it holds for $x=1$ that
\begin{gather}
 E_0^{-\frac{1}{N(\gamma-1)}}
  \le a(\tau)\equiv r(1,\tau), \quad \tau \in [0,T].    \label{3.50}
\end{gather}
\end{lemma}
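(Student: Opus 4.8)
The plan is to obtain all three inequalities from a single Hölder interpolation between the Lagrangian identity \ef{3.1} and the basic energy bound of Lemma~\ref{lm3.1}. By \ef{3.1}, for any $0\le x_1<x_2\le 1$,
\[
 r^N(x_2,\tau)-r^N(x_1,\tau)=N\int_{x_1}^{x_2}\frac{dy}{\rho(y,\tau)},
\]
so everything reduces to a lower bound for $\int_{x_1}^{x_2}\rho^{-1}\,dy$, which uses only that $\rho>0$ (assumed in the hypotheses) together with $\int_0^1\rho^{\gamma-1}dx\le(\gamma-1)E_0$ from \ef{3.4}. Writing $1=\rho^{\frac{\gamma-1}{\gamma}}\rho^{-\frac{\gamma-1}{\gamma}}$ and applying Hölder's inequality on $[x_1,x_2]$ with exponents $\gamma$ and $\frac{\gamma}{\gamma-1}$ gives $x_2-x_1\le\big(\int_{x_1}^{x_2}\rho^{\gamma-1}\big)^{1/\gamma}\big(\int_{x_1}^{x_2}\rho^{-1}\big)^{(\gamma-1)/\gamma}$, which after using the energy bound rearranges to
\[
 r^N(x_2,\tau)-r^N(x_1,\tau)=N\int_{x_1}^{x_2}\frac{dy}{\rho}
  \ge\frac{N}{\big((\gamma-1)E_0\big)^{1/(\gamma-1)}}(x_2-x_1)^{\frac{\gamma}{\gamma-1}}.
\]

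To land on the stated normalization I would then note that $s^{1/s}\le e^{1/e}<2\le N$ for every $s=\gamma-1>0$, so the constant above is $\ge E_0^{-1/(\gamma-1)}$, which is exactly \ef{3.23b}. For \ef{3.23a}, specialize $x_1=0$, $x_2=x$; since \ef{3.1} forces $r(0,\tau)=0$, the previous estimate reads $r^N(x,\tau)\ge E_0^{-1/(\gamma-1)}x^{\gamma/(\gamma-1)}$, and extracting the $N$-th root gives the lower bound. The upper bound $r(x,\tau)\le a(\tau)$ is immediate from $\partial_x r=(\rho r^{N-1})^{-1}>0$ in \ef{lagrange.b}, which makes $x\mapsto r(x,\tau)$ increasing with terminal value $r(1,\tau)=a(\tau)$. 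Finally \ef{3.50} is just \ef{3.23a} evaluated at $x=1$.

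I do not expect a genuine analytic obstacle here: the entire argument is one Hölder inequality plus bookkeeping of constants. The only mildly delicate point is arranging that the numerical constant collapses to $1$ so the bounds match the stated form, and that is precisely where the elementary estimate $(\gamma-1)^{1/(\gamma-1)}\le e^{1/e}<N$ enters; positivity of $\rho$ and the boundary value $r(0,\tau)=0$ are built into the Lagrangian formulation \ef{lagrange}--\ef{3.1} and require no separate verification.
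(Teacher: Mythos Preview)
Your proof is correct: the single H\"older interpolation between the Lagrangian identity \ef{3.1} and the energy bound $\int_0^1\rho^{\gamma-1}dx\le(\gamma-1)E_0$ from \ef{3.4} yields the key lower bound on $\int_{x_1}^{x_2}\rho^{-1}dy$, and your handling of the constant via $(\gamma-1)^{1/(\gamma-1)}\le e^{1/e}<2\le N$ cleanly reduces the sharper estimate to the normalization stated in \ef{3.23b}. The paper does not actually supply a proof of Lemma~\ref{lm3.3}---it merely cites~[7] and omits details---and the standard argument there is exactly the H\"older-plus-energy interpolation you give, so your proposal matches the intended approach.
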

Then, we have
\begin{lemma}\label{lm3.4}

Let $T>0$ and $\gamma>1$. Let $(\rho,u,a)$ be the solution to
FBVP~\eqref{3.2}--\eqref{3.3b} for $(x,\tau)\in[0,1]\times[0,T]$.
Assume further that it holds for some $x_0\in(0,1)$
\bgr
 \mbox{$\frac12$}\rho_-(1-x)^\beta
    \le \rho(x,\tau)\le 2\rho_+(1-x)^\beta,
    \quad (x,\tau)\in[x_0,1]\times[0,T],   \label{rho.pm}
\\
  |\rho r^{N-1}u_x(x,\tau)|\le 2M_{0},\quad
 (x,\tau)\in [x_0,1]\times[0,T],   \label{m.ka}
\egr
where $\beta\in(0,1)$, $\rho_+=2\rho^*$, $\rho_-=\frac12\rho_*$ and
the constant $M_0>0$ is given by \ef{M_0}. Then, there is a time
$T_1\in(0,T]\cap(0,1]$ so
that $(\rho,u,a)$ satisfies
\bgr
 c_0x^{\frac{\gamma}{N(\gamma-1)}}
  \leq r(x,\tau)\le a(\tau)\le  2a_0,
  \quad  (x,\tau)\in [0,1]\times[0,T_1], \label{4.9ba}
\\
 \mbox{$\frac12$}\rho_0(x)\le \rho(x,\tau)\le 2\rho_0(x),
    \quad      (x,\tau)\in[x_0,1]\times[0,T_1], \label{rho.+.a}
\\
     \int_0^{x}((\rho^{\theta})_yr^{N-1})^2dy
  +\int_0^\tau\int_0^{x}
    ((\rho^{\frac{\gamma+\theta}2})_y r^{N-1})^2dyds
  \le CE_x,
    (x,\tau)\in (x_0,1)\times[0,T_1], \label{3.10a}
 \egr
with $E_x=:\int_0^{x}(u^2+((\rho^{\theta})_{y}r^{N-1})^2)(y,0)dy
+\int_0^{1}\rho_0^{\gamma-1}(y) dy$ and $C_{x_0,x}>0$ a constant.
\end{lemma}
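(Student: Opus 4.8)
The plan is to run a continuity (bootstrap) argument on a short time interval, using the a-priori bounds \ef{rho.pm}--\ef{m.ka} together with the basic energy identities of Lemmas \ref{lm3.1}--\ref{lm3.3}. First I would establish \ef{4.9ba}: from \ef{3.50} we already have the lower bound $a(\tau)\ge E_0^{-1/N(\gamma-1)}$, which together with \ef{3.23a} gives the lower bound $r(x,\tau)\ge c_0 x^{\gamma/N(\gamma-1)}$ with $c_0$ depending only on $E_0$. For the upper bound on $a(\tau)$, I would use $a'(\tau)=u(1,\tau)$ and control $u(1,\tau)$; since $u(x,\tau)=u(x_0,\tau)+\int_{x_0}^1 u_y\,dy$ and on $[x_0,1]$ we can write $u_y = (r^{1-N})(\rho r^{N-1}u_x)\cdot$(Jacobian factors), the bound \ef{m.ka} plus \ef{rho.pm} plus the lower bound on $r$ give $|u(1,\tau)-u(x_0,\tau)|\le C\int_{x_0}^1 \rho^{-1}(1-y)^{?}\,dy$, which is finite precisely when $\beta<1$; combined with an $L^2$-in-time bound on $u(x_0,\tau)$ coming from \ef{3.4}, one integrates $a'$ in time to get $a(\tau)\le a_0 + C\sqrt{\tau}\cdot(\cdots)\le 2a_0$ for $\tau\le T_1$ small. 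This also forces $T_1\le 1$.

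Next, \ef{rho.+.a}: I would use the mass equation in Lagrangian form, $\rho_\tau = -\rho^2(r^{N-1}u)_x$, which along particle-independent $x$ reads $(\log\rho)_\tau = -\rho(r^{N-1}u)_x = -(\rho r^{N-1}u_x + (N-1)r^{-1}\rho u\cdot\frac{\partial r}{\partial x}\,\cdots)$. On $[x_0,1]$ the first term is bounded by $2M_0/(1-x)^{?}$... more carefully, one rewrites $\rho(r^{N-1}u)_x$ using \ef{m.ka} and the explicit formula \ef{3.1} for $r^N$; the key point is that $|(\log\rho)_\tau|\le C$ uniformly on $[x_0,1]\times[0,T_1]$, which upon integration in $\tau$ gives $e^{-C\tau}\rho_0\le\rho\le e^{C\tau}\rho_0$, hence \ef{rho.+.a} once $T_1$ is chosen with $e^{CT_1}\le 2$. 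Here I would need to be careful that the bound on $(\log\rho)_\tau$ does not blow up as $x\to1$; the factor $\rho^\theta$-type weights and the structure of $(r^{N-1}u)_x$ should make the dangerous terms carry a compensating power of $\rho$, and the hypotheses on $\beta$ are what make this work.

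Finally, for the weighted estimate \ef{3.10a}, I would differentiate the momentum equation appropriately or, more directly, test it against a suitable multiplier to produce the quantity $\int_0^x((\rho^\theta)_y r^{N-1})^2\,dy$. The natural route is: from the second equation of \ef{3.2}, the "effective viscous flux" $F:=\rho^\gamma-\theta\rho^{\theta+1}(r^{N-1}u)_x$ satisfies $F_x = -r^{1-N}u_\tau - \frac{N-1}{r}(\rho^\theta)_x u$; since $\theta\rho^{\theta+1}(r^{N-1}u)_x = \theta\rho^{\theta-1}\cdot\rho^2(r^{N-1}u)_x = -\theta\rho^{\theta-1}\rho_\tau = -(\frac{\theta}{\theta-1}\rho^{\theta-1})_\tau\cdot$(constant), one integrates in $x$ from $x$ to $1$ using $\rho(1,\tau)=0$ to solve for $(\rho^\theta)_x r^{N-1}$ in terms of a time derivative of $\int_x^1 u\,r^{1-N}\cdots$ plus lower-order terms, then squares and integrates. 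Tracking the $r^{N-1}$ weights and the vacuum behavior near $x=1$ produces the bound $CE_x$ with $E_x$ as stated; the time-integral term for $\rho^{(\gamma+\theta)/2}$ is then extracted from the energy dissipation in \ef{3.4}/\ef{3.40} combined with this pointwise-in-time control. I expect the main obstacle to be the degenerate weights near the free boundary $x=1$: one must verify that every term generated in handling $(\rho^\theta)_x r^{N-1}$ and $(\rho^{(\gamma+\theta)/2})_x r^{N-1}$ has enough powers of $\rho\sim(1-x)^\beta$ to be integrable, and this is exactly where assumptions $A_1$--$A_2$ (in particular the constraints on $\beta$ relative to $\theta$ and $\gamma$) are consumed.
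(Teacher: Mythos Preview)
Your plan for \ef{4.9ba} and \ef{rho.+.a} is essentially what the paper does, with two small corrections. First, for the upper bound on $a(\tau)$ you do not need (and do not have) an ``$L^2$-in-time bound on $u(x_0,\tau)$''; instead, the one-line Sobolev inequality
\[
\|u(\cdot,\tau)\|_{L^\infty([x_0,1])}\le (1-x_0)^{-1}\!\int_{x_0}^1|u|\,dx+\int_{x_0}^1|u_x|\,dx
\]
bounds the first term by $C_{x_0}E_0^{1/2}$ from \ef{3.4} and the second by $CM_0\rho_-^{-1}$ via \ef{m.ka}--\ef{rho.pm} (here is where $\beta<1$ enters). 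This gives a pointwise-in-time bound $\|u\|_{L^\infty}\le M_1$, hence $a(\tau)\le a_0+M_1\tau$. Second, your worry about degeneracy in \ef{rho.+.a} is unnecessary: on $[x_0,1]$ one has $|(\log\rho)_\tau|=|\rho r^{N-1}u_x+(N-1)u/r|\le 2M_0+CM_1$, uniformly in $x$, since \ef{m.ka} already controls the first piece and $r$ is bounded below on $[x_0,1]$.

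For \ef{3.10a} your route diverges from the paper's and would be harder to close. You propose to integrate in $x$ from $x$ to $1$ and solve pointwise for $(\rho^\theta)_x r^{N-1}$, then square and integrate; but that formula contains a time integral of $(\rho^\gamma)_x r^{N-1}$, so the estimate does not close without further input, and the dissipation term $\int_0^\tau\!\int((\rho^{(\gamma+\theta)/2})_y r^{N-1})^2$ does not appear naturally. The paper instead combines the mass and momentum equations into the single transport identity
\[
\bigl(u+r^{N-1}(\rho^\theta)_x\bigr)_\tau+r^{N-1}(\rho^\gamma)_x=0,
\]
and tests it against $\phi(x)\bigl(u+r^{N-1}(\rho^\theta)_x\bigr)$ with a cutoff $\phi$ supported in $[0,(1+\eta)x]$. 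The cross term $r^{N-1}(\rho^\gamma)_x\cdot r^{N-1}(\rho^\theta)_x$ is exactly $c\,((\rho^{(\gamma+\theta)/2})_x r^{N-1})^2\ge 0$, which produces the time-integrated dissipation, while $r^{N-1}(\rho^\gamma)_x\cdot u$ is handled by one integration by parts in $x$. This is the Bresch--Desjardins structure, and it closes in one step. Finally, assumptions $A_1$--$A_2$ on $\beta$ are \emph{not} needed here; the estimate \ef{3.10a} is stated for $x<1$ and only uses \ef{rho.pm}--\ef{m.ka} and the already-established \ef{4.9ba}--\ef{rho.+.a}.
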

\begin{proof}
First of all, it follows directly from \ef{m.ka} and \ef{rho.pm}
that
\bma
 \|u\|_{L^\infty([x_0,1]\times[0,T])}
 \le &
    (1-x_0)^{-1}\int_{x_0}^1|u|dx
  +\int_{x_0}^1|u_x|dx             \label{v_0a}
  \\
  \le &
    C_{x_0}(E_0^{1/2}+ M_0\rho_-^{-1})=:M_1, \label{6.v_0b}
\ema
which yields \ef{4.9ba} with the help of \ef{3.23a} and
\be
r(x,\tau)\le a(\tau) =a_0 +\int_0^{\tau} u(1,s)ds \le a_0 + TM_1\le 2
a_0, \label{r0a}
\ee
for $\tau\in[0, T_{1,a}]$ with
\be
T_{1,a}=:a_0M_1^{-1}.\label{T_{1,a}}
\ee
It follows from $\ef{3.2}_1$ that
\be
   \rho (x,\tau)
  =\rho_0(x)\exp\left\{-\int_0^\tau(\rho
   r^{N-1}u_x+(N-1)ur^{-1})(x,s)ds\right\},   \label{rhoport}
\ee
which together with \ef{m.ka}, \ef{4.9ba} and \ef{6.v_0b} yields
\ef{rho.+.a} for $\tau\in[0,T_{1,b}]$ with $T_{1,b}$ determined by
\be
 T_{1,b}=:\min\{\,T_{1,a},\
          (2M_1E_0^{\frac{1}{N(\gamma-1)}} x_0^{-\frac{\gamma}{N(\gamma-1)}}
           + 2M_0)^{-1}\ln2\,  \}.\label{T_{1,b}}
\ee

Differentiating $\ef{3.2}_1$ with respect to $x\in[x_0,1)$, substituting the resulted equation into $\ef{3.2}_2$ and using
the fact $ \frac{\partial r}{\partial \tau}=u,$  we have
 \be
  (u + r^{N-1}(\rho^\theta)_x)_\tau+(\rho^\gamma)_xr^{N-1} =0.  \label{3.15}
 \ee
Multiplying \ef{3.15} by $\phi(x)(u+ r^{N-1}(\rho^\theta)_x)$, where
$\phi\in C^\infty([0,1))$, $0\le \phi\le 1$, $\phi(y)=1$ for
$y\in[0,x]$ and $\phi(y)=0$ for $y>(1+\eta)x$ with $\eta>0$ small
enough, and integrating the resulted equation over
$[0,1]\times[0,\tau]$ by parts, we obtain
 \bma
 &\int_0^1\phi(x)(u+r^{N-1}(\rho^\theta)_x)^2 dx
  +\int_0^\tau\int_0^1
   \phi(x)((\rho^{\frac{\gamma+\theta}2})_x r^{N-1})^2dxds
\nnm\\
\le
 & C\int_0^1\phi(x)(u+r^{N-1}(\rho^\theta)_x)^2(x,0) dx
  +  \int_0^\tau\int_0^1 \phi_xu r^{N-1}\rho^\gamma dx ds+
  \mbox{$\frac{1}{\gamma-1}$}\int_0^1\phi(x)\rho^{\gamma-1}(x,0)dx
\nnm\\
 \le
 & C_{x_0,x}E_x + (2a_0)^{N-1}M_1\tau(\rho^*)^{2\gamma}, \label{3.16a}
 \ema
where we have used \ef{4.9ba} and \ef{rho.+.a}. Choose
$$
 T_{1,c}=:\min\{\ T_{1,a},\,T_{1,b},\ ((2a_0)^{N-1}M_1(\rho^*)^{2\gamma})^{-1}E_x\,\},
 \quad
 T_1=:\min\{\,T_{1,a},\ T_{1,b},\ T_{1,c}\,\},
$$
then the combination of \ef{3.16a} and \ef{3.4} yield \ef{3.10a}
for $\tau\in[0,T_{1}]$.
\end{proof}

%---------------------------------------------------------------------------
\section{Boundary regularities}
This section is devoted to the boundary regularities of solutions to FBVP~\eqref{3.2}--\eqref{3.3b} . To this end, we first establish the regularities of solution $(\rho,u,a)$ away from the
symmetry center and the free boundary.
\begin{lemma} \label{boundary.a}
Under the assumptions of Lemma~\ref{lm3.4}, there is a time
$T_2\in(0,T_1]$ so that the solution $(\rho,u,a)$ satisfies for
$x_1\in(x_0,1)$ and $\tau\in[0,T_2]$ that
\bgr
     \|(\rho_x,u_x)(\tau)\|^2_{L^2([x_0,x_1])}
  + \int_0^{\tau}\|(u_{xx},u_{s})(s)\|^2_{L^2([x_0,x_1])}ds
 \le
  C_{4}\delta_4^2,  \label{boundary.a.1}
\\
   \|(\rho_{xx},u_{xx},u_{\tau})(\tau)\|^2_{L^2([x_0,x_1])}
  + \int_0^{\tau} \| (u_{xxx},u_{xs})(s)\|^2_{L^2([x_0,x_1])}ds
 \le
 C_{5}\delta_5^2, \label{boundary.a.2}
\\
   \|(\rho_{xxx},u_{xxx},u_{x\tau})(\tau)\|^2_{L^2([x_0,x_1])}
  + \int_0^{\tau} \| (u_{xxxx},u_{xxs})(s)\|^2_{L^2([x_0,x_1])}ds
 \le
 C_{6}\delta_6^2,  \label{boundary.a.3}
\egr
provided that $(\rho_0,u_0)\in H^3(I)$ with $[x_0,x_1]\subset
(x_0^-,x_1^+)$, where $x_0^-\in(0,x_0)$ and $I=:[x_0^-,x_1^+]$, where $C_{i}>0, (i=4, 5, 6)$ are constants
dependent of $x_0$ and $x_1$, but independent of $M_0$,
$\delta_4=\|(\rho_0,u_0)\|_{H^{1}(I)}$,
$\delta_5=\|(\rho_0,u_0)\|_{H^{2}(I)}$, and
$\delta_6=\|(\rho_0,u_0)\|_{H^{3}(I)}$.
In addition, it holds
\be
 |\rho r^{N-1}u_x(x,\tau)|\le M_{0,a},\quad
 |u(x,\tau)|\le M_{1,a},\quad
 (x,\tau)\in [x_0,\,x_1]\times[0,T_2],   \label{M0,a}
\ee
with $
 M_{0,a}=(2a_0)^{N-1}\rho^*(C_{4}\delta_4^2+C_{5}\delta_5^2)^{1/2}, \ \
 M_{1,a}=C_{x_0}(E_0^{1/2}+ M_{0,a}\rho_*^{-1}).
$
\end{lemma}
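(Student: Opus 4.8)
The plan is to establish \eqref{boundary.a.1}--\eqref{boundary.a.3} by a standard energy-method bootstrap for the (locally uniformly parabolic, in the interior strip $[x_0,x_1]$) system \eqref{3.2}, using a smooth cutoff $\phi$ supported in a slightly larger interval $I=[x_0^-,x_1^+]\subset(0,1)$ with $\phi\equiv 1$ on $[x_0,x_1]$, exactly as in the proof of Lemma~\ref{lm3.4}. Away from both $x=0$ and $x=1$ the density is bounded above and below (by \eqref{rho.+.a} and $\rho_0$ comparable to $(1-x)^\beta$ away from $1$, plus \eqref{4.9ba} giving $c_0\le r\le 2a_0$), so $\rho^{\theta+1}r^{2(N-1)}$ is bounded below and the second equation in \eqref{3.2} is a genuine parabolic equation for $u$ with coefficients controlled by lower-order norms of $(\rho,u)$. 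First I would rewrite $\eqref{3.2}_2$ in non-divergence form, isolating $\theta\rho^{\theta+1}r^{2(N-1)}u_{xx}$ on the left and collecting all remaining terms (which involve $\rho^\gamma_x$, $(\rho^{\theta+1})_x$, the $\frac{N-1}{r}(\rho^\theta)_x u$ term, and factors of $u$, $u_x$, $r_x=\rho r^{N-1}$) as a forcing term $F$.

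The energy estimates then proceed in the usual order. For \eqref{boundary.a.1}: multiply the $u$-equation by $\phi^2 u_\tau$ and integrate; the parabolic term gives $\frac{d}{d\tau}\int\phi^2\rho^{\theta+1}r^{2(N-1)}u_x^2$ up to commutator terms, and $\int\phi^2 u_\tau^2$ on the left; the right side is absorbed using Cauchy--Schwarz, the lower-order bounds, and Gronwall, with the time interval possibly shrunk to some $T_2\le T_1$ to close the estimate. The $\rho_x$ bound comes from differentiating $\eqref{3.2}_1$ in $x$, i.e. $(\rho_x)_\tau = $ (linear in $\rho_x$, $u_x$, $u_{xx}$ with bounded coefficients on $[x_0,x_1]$), multiply by $\phi^2\rho_x$ and apply Gronwall, feeding in the just-obtained control of $\int_0^\tau\|u_{xx}\|^2$. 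This also yields $\int_0^\tau\|u_{xx}\|^2$ directly once $u_\tau\in L^2$ is known, by solving the elliptic relation for $u_{xx}$ pointwise in $\tau$. Then \eqref{boundary.a.2} and \eqref{boundary.a.3} are obtained by differentiating the equations once more (resp. twice) in $x$, multiplying by $\phi^2 u_{x\tau}$ (resp. $\phi^2 u_{xx\tau}$), and repeating: each level uses the bounds from the previous level, the Sobolev embedding $H^1([x_0^-,x_1^+])\hookrightarrow L^\infty$ to control products, and the $H^3$-regularity of the initial data to bound the data terms by $\delta_5^2$, $\delta_6^2$. The constants $C_4,C_5,C_6$ depend on $x_0,x_1$ (through $\phi$, the lower bound on $\rho$, and $c_0$) but not on $M_0$, since the crude bound \eqref{m.ka} only entered through \eqref{rho.pm}--\eqref{rho.+.a} and is not used quantitatively in the interior strip.

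Finally, the bounds \eqref{M0,a}: from \eqref{boundary.a.1}--\eqref{boundary.a.2} and $H^1\hookrightarrow L^\infty$ on $[x_0^-,x_1^+]$ we get $\|u_x(\tau)\|_{L^\infty([x_0,x_1])}\le C(\|u_x\|_{L^2}+\|u_{xx}\|_{L^2})\le (C_4\delta_4^2+C_5\delta_5^2)^{1/2}$, and combined with $\rho\le\rho^*(1-x)^\beta\le\rho^*$ and $r^{N-1}\le(2a_0)^{N-1}$ this gives $|\rho r^{N-1}u_x|\le M_{0,a}$; then $|u|\le M_{1,a}$ follows exactly as in \eqref{v_0a}--\eqref{6.v_0b}, using $u(0,\tau)=0$ is not available here but the mean-value/integration argument on $[x_0,1]$ with the energy bound $\|u\|_{L^2}\le CE_0^{1/2}$ and $\|u_x\|_{L^1([x_0,x_1])}\le C\rho_*^{-1}M_{0,a}$ works as written. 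The main obstacle I anticipate is bookkeeping the commutator terms generated by the cutoff $\phi$ and the $r$-dependence (recall $r=r(x,\tau)$ solves $r_\tau=u$, so $r_x$-factors are themselves only as regular as $\int u_x$), and verifying at each differentiation level that every term on the right genuinely closes — i.e. is either lower order, or comes with a small factor $\tau$, or is absorbable into the parabolic dissipation — so that a single shrinking of $T$ to $T_2$ suffices for all three estimates simultaneously; this is routine but the dependence of the coefficients on $r$ (rather than $x$) makes the algebra heavier than in the fixed-domain case.
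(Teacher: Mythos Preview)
Your plan is essentially the same as the paper's: localize with a cutoff supported in $(x_0^-,x_1^+)$, test the momentum equation (and its derivatives) against $\phi u_\tau$, $\phi u_{x\tau}$, etc., close by Gronwall after shrinking the time. Two points of comparison. First, for the density regularity the paper does not differentiate the mass equation as you propose but instead uses the already-established estimate \eqref{3.10a} (for $\rho_x$) and the BD-type identity \eqref{3.15} differentiated in $x$ (for $\rho_{xx}$); your route via $\eqref{3.2}_1$ also works on the interior strip since $\rho$ is bounded below there, it just produces more commutator terms. Second, your justification for the $M_0$-independence of $C_4,C_5,C_6$ is not quite right: the a-priori bounds \eqref{m.ka}--\eqref{6.v_0b} \emph{are} used quantitatively in the interior estimates (they appear as coefficients in the Gronwall inequality, cf.\ \eqref{boundary.a.1.1}--\eqref{boundary.a.1.2}); the independence of the final constants from $M_0$ is obtained by choosing $T_2$ small enough (depending on $M_0$, $M_1$) so that the Gronwall factor $e^{C(M_0,M_1)T_2}$ is bounded by a fixed constant, while the $M_0$-dependent source terms are killed by a factor of $\tau\le T_2$. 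This distinction matters later in the paper, where $M_0$ is determined self-consistently in Lemma~\ref{uniform.m}.
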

\begin{proof}
It is easy to verify that \ef{M0,a} follows from \ef{boundary.a.1},
\ef{boundary.a.2}, \ef{rho.+.a} and \ef{v_0a}. What left is to show
\ef{boundary.a.1}-\ef{boundary.a.3}. Rewrite $\ef{3.2}_2$ as
\be
  r^{1-N}u_{\tau}+(\rho^\gamma
   -\theta\rho^{1+\theta}r^{N-1}u_x)_x
    +(1-\theta)(N-1)(\rho^\theta)_x\frac{u}{r}
      -\theta(N-1)\rho^{\theta}(\frac{u}{r})_x=0,  \label{3.2re}
\ee
Take inner product between \ef{3.2re} and $\phi\rho^{1-N}u_{\tau}$, where $\phi=\psi^2(x)$ and $\psi\in C^\infty([0,1])$ satisfies $0\leq\psi\leq1, \psi=1$ for $x\in[(1-2\eta)x_0,(1+2\eta)x_1]$, and $\psi=0$ for $x\in[0,(1-3\eta)x_0]\cup[(1+3\eta)x_1,1]$ with a fixed constant $\eta\in(0,1)$ small enough so that $[(1-3\eta)x_0,(1+3\eta)x_1]\subset(x_0^-,x_1^+)$. By lemma~\ref {lm3.4} and a direct computations, it follows
\bma
  &\frac{d}{d\tau}\int_0^1 (
      \frac{\theta}{2} \phi\rho^{2+\theta-N}r^{N-1}u_x^2
         -\phi\rho^{\gamma+1-N}u_x)dx
    +\frac12\int_0^1 \phi\rho^{1-N}r^{1-N}u_{\tau}^2dx
\nnm\\
   \le
   &\,C_{x_0}(M_0+M_1+(\rho^*)^{\gamma-\theta})
      \int_0^1\phi\rho^{2+\theta-N}r^{N-1}u_x^2dx
         +C_{x_0,x_1}M_0M_1(\rho_*+\rho^*)^{1+\gamma-N}
\nnm\\
   &\,+C_{x_0,x_1}a_0^{N-1}(\rho_*+\rho^*)^{2\gamma+1-N}
       +C_{x_0,x_1}M_0^2(\rho_*+\rho^*)^{1+2\theta-N}
         +C_{x_0,x_1}M_1^2(\rho_*+\rho^*)^{2\theta-1-N}
\nnm\\
   &\,+C_{x_0,x_1^+}\rho_*^{1-N}E_{x_1^+}(M_0^2+M_1^2
        +(\rho_*+\rho^*)^{2\gamma+1-N-2\theta}),\label{boundary.a.1.1}
\ema
where $C_{x_0,x_1}>0$ is a generic constant dependent of $x_0$,
$x_1$, but independent of $M_0,\, M_1$.

Integrating \ef{boundary.a.1.1} over $[0,\tau]$ to get
\bma
&\int_0^1
    \phi \rho^{2+\theta-N} r^{N-1}u_x^2 dx
       +\int_0^{\tau} \int_0^1 \phi\rho^{1-N} r^{1-N}u^2_s dxds
\nnm\\
\le
&    C_{x_0,x_1}(M_0+M_1+(\rho^*)^{\gamma-\theta})\int_0^\tau \int_0^1\phi \rho^{2+\theta-N} r^{N-1}u_x^2 dx ds
   +\tau(C_{x_0,x_1}(M_0^2+M_1^2+a_0^{N-1})
   \nnm\\
& +C_{x_0,x_1}\rho_*^{1-N}E_{x_1^+}(M_0^2+M_1^2))+C_{x_0,x_1}\delta_4^2,  \label{boundary.a.1.2}
\ema
which together with Gronwall's inequality leads to
\be
    \int_0^1\phi \rho^{2+\theta-N} r^{N-1}u_x^2 dx+
       \int_0^{\tau} \int_0^1 \phi\rho^{1-N} r^{1-N}u^2_s dxds
          \le C_{x_0,x_1,1} \delta_4^2.  \label{boundary.a.1.3}
\ee
Due to the fact that $\phi(x)=1$ for
$x\in[(1-2\eta)x_0,(1+2\eta)x_1]$, it follows
\be
 \int_{(1-2\eta)x_0}^{(1+2\eta)x_1} u_x^2 dx
   +\int_0^{\tau}\int_{(1-2\eta)x_0}^{(1+2\eta)x_1}u^2_s dxds
 \le
     C_{x_0,x_1,2}\delta_4^2,  \label{boundary.a.1.4}
 \ee
for some constant $C_{x_0,x_1,2}>0$ and $\tau\in[0,T_{2,a}]$ with
$T_{2,a}$ chosen as
\bgr
 T_{2,a}=:\min\{T_1,\, K_1^{-1}\delta_4^2, \
      K_2^{-1} \ln2 \,\},
\nnm\\
K_1=:C_{x_0,x_1}(M_0^2+M_1^2+a_0^{N-1})
     +C_{x_0,x_1}\rho_*^{1-N}E_{x_1^+}(M_0^2+M_1^2),\nnm\\
 K_2=:C_{x_0,x_1}(M_0+M_1+(\rho^*)^{\gamma-\theta}).\nnm
\egr
In addition, it follows from \ef{boundary.a.1.4}, $\ef{3.2}_2$ and
Lemma~\ref{lm3.4} that
\be
\int_0^{\tau}\int_{(1-2\eta)x_0}^{(1+2\eta)x_1}
 (u_{xx}^2+\rho_{xs}^2)dxds
 \le
    C_{x_0,x_1}\delta_4^2.   \label{boundary.a.1.5}
\ee
The combination of \ef{boundary.a.1.4}--\ef{boundary.a.1.5} and \ef{3.10a} gives rise to
\be
  \int_{(1-2\eta)x_0}^{(1+2\eta)x_1}
   ( u^2_x + \rho_x^2)dx
 + \int_0^{\tau}\int_{(1-2\eta)x_0}^{(1+2\eta)x_1}
   ( u_{xx}^2 +\rho_{xs}^2+u^2_{s})dx ds
 \le
 C_{x_0,x_1}\delta_4^2,     \label{boundary.a.1.6}
\ee
for $\tau\in[0,T_{2,a}]$, which implies \ef{boundary.a.1} for any $T_2\le
T_{2,a}$.
\par

The higher order regularities of the solution can be obtained by
applying the similar arguments as the proof of \ef{boundary.a.1.6}. Indeed,
differentiating $\ef{3.2re}$ with respect to $\tau$ gives
\bma
 &r^{1-N}u_{\tau\tau} -(1-N) r^{-N}u u_\tau
  + (\rho^\gamma -\theta\rho^{\theta+1}r^N u_x)_{x\tau}\nnm\\
  &+ (1-\theta)(N-1)((\rho^\theta)_x\frac{u}{r})_\tau
  - \theta(N-1)\mbox{$(\rho^\theta(\frac{u}{r})_x)_\tau$} = 0,
  \label{boundary.a.2.1}
\ema
taking inner product between \ef{boundary.a.2.1} and $\phi u_\tau$ over
$[0,1]$, where $\phi=\psi^2(x)$ and $\psi\in C^\infty([0,1])$
satisfies $0\le \psi(x)\le 1$, $\psi(x)=1$ for $x\in
[(1-\eta)x_0,(1+\eta)x_1]$, and $\psi(x) =0$ for $x\in
[0,(1-2\eta)x_0]\cup[(1+2\eta)x_1,1]$, we can obtain
\bma
 &  \frac12\frac{d}{d\tau}\int_0^1 \phi r^{1-N}u^2_\tau dx
   +\frac{\theta}{2}\int_0^1 \phi \rho^{\theta+1}r^{N-1}u^2_{x\tau}dx
\nnm\\
  =& \int_0^1 \phi_x (\rho^\gamma -\theta\rho^{\theta+1}r^{N-1}u_x)_{\tau}u_{\tau}dx
    +\frac{N-1}{2}\int_0^1 \phi r^{-N}u u^2_\tau dx
   +\theta(N-1)\int_0^1 \phi \mbox{$(\rho^{\theta}(\frac{u}{r})_x)_\tau$} u_{\tau}dx
\nnm\\
   & +\int_0^1 \phi ((\rho^\gamma)_{\tau} -\theta(\rho^{\theta+1}r^{N-1})_{\tau}u_x) u_{x\tau}dx
     -\frac{\theta}{2}\int_0^1 \phi \rho^{\theta+1}r^{N-1}u^2_{x\tau}dx
\nnm\\
   &+(1-\theta)(1-N)\int_0^1 \phi \mbox{$((\rho^{\theta})_x\frac{u}{r})_\tau$} u_\tau dx
\nnm\\
 \le
 &\,C_{x_0,x_1}(1+M_1+(\rho_*+\rho^*)^{\theta-1})
     \int_0^1 \phi r^{1-N}u^2_\tau dx
       +C_{x_0,x_1}(1+(\rho^*)^{\theta+1}
 \nnm\\
    &+(\rho_*+\rho^*)^{\theta-1})
       \int_{(1-2\eta)x_0}^{(1+2\eta)x_1} u^2_{\tau} dx
        +C_{x_0,x_1}(\rho_*+\rho^*)^{\theta-1}
         \int_{(1-2\eta)x_0}^{(1+2\eta)x_1} \rho_{x\tau}^2 dx
\nnm\\
    &+C_{x_0,x_1,3}(1+(\rho^*)^{2(\gamma+1)}+(\rho^*)^4)(1 +M_0^4+M_1^4),   \label{boundary.a.2.2}
\ema
which together with Lemma~\ref{lm3.4} and \ef{boundary.a.1.6} yields
 \be
   \int_0^1\phi r^{1-N}u^2_{\tau} dx
 + \int_0^\tau\int_0^1 \phi \rho^{\theta+1}r^{N-1}u^2_{xs}dx ds
 \le  C_{x_0,x_1}\delta_5^2, \quad \tau\in[0,T_{2,d}],   \label{boundary.a.2.3}
 \ee
with $T_{2,b}$ chosen as
 \bgr
 T_{2,b}=:\min\{\,T_{2,a},\ K_3^{-1}\delta_4^2,\ (C_{x_0}(1+M_1+(\rho_*+\rho^*)^{\theta-1}))^{-1}\ln2\,\},
 \nnm\\
K_3=:C_{x_0,x_1,3}(1+(\rho^*)^{2(\gamma+1)}+(\rho^*)^4)
     (1+M_0^4+M_1^4).
  \nnm
  \egr
This and $\ef{3.2}_2$, \ef{4.9ba}--\ef{rho.+.a}
imply that
\be
 \int_{(1-\eta)x_0}^{(1+\eta)x_1}(u_{xx}^2+u^2_{\tau})dx
 + \int_0^{\tau}\int_{(1-\eta)x_0}^{(1+\eta)x_1} u^2_{xs}dx ds
 \le  C_{x_0,x_1}\delta_5^2, \quad \tau\in[0,T_{2,b}].    \label{boundary.a.2.4}
 \ee

Meanwhile, taking inner product between $\ef{3.15}_{x}$ and
$\phi(x)(u+r^{N-1}(\rho^{\theta})_x)_{x}$ over $[0,1]\times[0,\tau]$, and using
Lemma~\ref{lm3.4}, we can obtain
\bma
 &\int_0^1\phi (u+r^{N-1}(\rho^{\theta})_x)_x^2 dx
  +\frac{\gamma}{\theta}\int_0^\tau\int_0^1
   \phi \rho^{\gamma-\theta}(u+r^{N-1}(\rho^{\theta})_x)_x^2dxds
\nnm\\
=
 & \int_0^1\phi (u+r^{N-1}(\rho^{\theta})_x)_x^2(x,0) dx
  + \frac{2\gamma}{\theta}\int_0^\tau\int_0^1
     \phi \rho^{\gamma-\theta}(u+r^{N-1}(\rho^{\theta})_x)_x u_xdxds
\nnm\\
 &-\frac{2\gamma}{\theta}\int_0^\tau\int_0^1\phi r^{N-1}
     (u+r^{N-1}(\rho^{\theta})_x)_x
       (\rho^{\gamma-\theta})_x (\rho^{\theta})_xdxds
\nnm\\
\le
& C_{x_0,x_1}((\rho^*)^{\gamma-\theta}+(\rho^*)^{\gamma-1}
   +(\rho_*+\rho^*)^{\gamma-3}+(\rho_*+\rho^*)^{2(\theta-1)}
\nnm\\
 &+(\rho_*)^{\theta-\gamma}\int_0^\tau
  \max[(\rho^{\gamma})_x r^{N-1}]^2ds)\delta_4^2.
  \label{boundary.a.2.5}
\ema
Integrating \ef{3.15} over $[0,\tau]$ to get
\be
  r^{N-1}(\rho^{\theta})_x(x,\tau)
   =r^{N-1}(x,0)(\rho_0^{\theta})_x
    -\int_0^\tau (\rho^{\gamma})_x r^{N-1}(x,s)ds
      -u(x,\tau)+u(x,0),  \label{boundary.a.2.6}
\ee
one deduces from \eqref{rho.+.a}, \ef{6.v_0b} and \ef{boundary.a.2.5} that for
any $(x,\tau)\in[(1-2\eta)x_0,(1+2\eta)x_1]\times[0,T_{2,b}]$,
\bma
 &\int_0^\tau[(\rho^\gamma)_xr^{N-1}]^{2}(x,s)ds\nnm\\
\leq&
    C\tau(\rho^*)^{2(\gamma-\theta)}(1+M_1^2)
     +C(\rho^*)^{2(\gamma-\theta)}
     \int_0^\tau\int_0^s[(\rho^\gamma)_xr^{N-1}]^{2}(x,s)dzds,
       \label{boundary.a.2.7}
 \ema
 which implies for $(x,\tau)\in[(1-2\eta)x_0,(1+2\eta)x_1]\times[0,T_{2,c}]$ that
 \be
 \int_0^\tau[(\rho^\gamma)_xr^{N-1}]^{2}(x,s)ds
 \leq
  2\ln2,  \quad  \tau\in[0,T_{2,c}],\label{boundary.a.2.8}
 \ee
with $T_{2,c}$ chosen as
\[
 T_{2,c} = :\min\{\,T_{2,b},\
      (C(\rho^*)^{2(\gamma-\theta)}(1+M_1^2))^{-1}\ln2,\
      (C(\rho^*)^{2(\gamma-\theta)})^{-1}\ln2\,\}.
\]
Using Lemma~\ref{lm3.1}, \ef{3.10a} and \ef{boundary.a.1} that
\be
  \int_{(1-\eta)x_0}^{(1+\eta)x_1}\rho_{xx}^2dx
 + \int_0^{\tau}\int_{(1-\eta)x_0}^{(1+\eta)x_1}\rho_{xx}^2dxds
  \le
   C_{x_0,x_1}\delta_5^2,\quad  \tau\in[0,T_{2,c}].\label{boundary.a.2.10}
\ee
The combination of \ef{boundary.a.2.4}, \ef{boundary.a.2.10}, \ef{boundary.a.1}
and $\ef{3.2}_2$ gives rise to
\be
  \int_{(1-\eta)x_0}^{(1+\eta)x_1}
   (u^2_{\tau}+u_{xx}^2+\rho_{xx}^2)dx
 + \int_0^{\tau}\int_{(1-\eta)x_0}^{(1+\eta)x_1}
   (u_{xxx}^2+\rho_{xx}^2+u^2_{xs})dxds
 \le
  C_{x_0,x_1}\delta_5^2,  \label{boundary.a.2.11}
\ee
for $\tau\in[0,T_{2,c}]$, which implies \ef{boundary.a.2} for any $T_2\le
T_{2,c}$.
\par

Differentiating $r^{N-1}\ef{3.2re}$ with respect to $x$ to get
\bma
 &u_{xxx\tau} + (r^{N-1}(\rho^\gamma
 -\theta\rho^{\theta+1}r^{N-1}u_x)_{x})_{xxx}\nnm\\
  &+(N-1)(1-\theta)(r^{N-2}(\rho^{\theta})_x u)_{xxx}
  -\theta(N-1)(r^{N-1}\mbox{$\rho^{\theta}(\frac{u}{r})_x$})_{xxx} = 0,   \label{boundary.a.3.1}
 \ema
and taking inner product between \ef{boundary.a.3.1} and $\phi(x)u_{xxx}$
over $[0,1]\times[0,\tau]$, where $\phi=\psi^2(x)$ and $\psi\in
C_0^\infty([0,1])$ satisfies $0\le \psi(x)\le 1$, $\psi(x)=1$ for
$x\in [x_0,x_1]$, and $\psi(x) =0$ for $x\in
[0,(1-\eta)x_0]\cup[(1+\eta)x_1,1]$.
We can obtain
\bma
 & \frac12\frac{d}{d\tau}\int_0^1\phi u_{xxx}^2dx
  -\int_0^1\phi (r^{N-1}(\rho^\gamma
  -\rho^{\theta+1}r^{N-1}u_x)_{x})_{xx}u_{xxxx}dx
\nnm\\
 =&\,-\int_0^1\phi_x (r^{N-1}(\rho^\gamma
     -\rho^{\theta+1}r^{N-1}u_x)_{x})_{x}u_{xxxx}dx
     -\int_0^1\phi_{xx} (r^{N-1}(\rho^\gamma
     -\rho^{\theta+1}r^{N-1}u_x)_{x})_{x}u_{xxx}dx
\nnm\\
 & +\theta(N-1)\int_0^1\phi
   (r^{N-1}\mbox{$\rho^{\theta}(\frac{u}{r})_x$})_{xxx}u_{xxx}dx
    +(N-1)(1-\theta)\int_0^1\phi_x
    u_{xxx}(r^{N-2}(\rho^{\theta})_x u)_{xx}dx
\nnm\\
 &+(N-1)(1-\theta)\int_0^1\phi u_{xxxx}(r^{N-2}(\rho^{\theta})_x u)_{xx} dx.
    \label{boundary.a.3.2}
  \ema
Lemma~\ref{lm3.4}, \ef{3.2}, \ef{boundary.a.1.6} and
\ef{boundary.a.2.11} lead to that
\bma
 & \frac{d}{d\tau}\int_0^1\phi (u_{xxx}^2+ \gamma\rho^{\gamma-3}\rho_{xxx}^2)dx
   +\int_0^1\phi \rho^{\theta+1}r^{2(N-1)}u_{xxxx}^2dx
\nnm\\
 \le &\,
   C_{x_0,x_1}(1+\delta_4^2+\delta_5^2
        +\|u_{xx}(\tau)\|_{L^\infty([(1-\eta)x_0,(1+\eta)x_1])})
    \int_0^1\phi(\rho^{\gamma-3}\rho_{xxx}^2 + u_{xxx}^2)dx
\nnm\\
  & +C_{x_0,x_1}(1+\delta_4^2+\delta_5^2)^2
    +C_{x_0,x_1}\|u_{xx}(\tau)\|_{L^\infty([(1-\eta)x_0,(1+\eta)x_1])}\delta_5^2,
\nnm
\ema
 we apply the Gronwall's inequality, and the fact $\psi(x)=1$ for
$x\in [x_0,x_1]$, we can obtain
\be
   \int_{x_0}^{x_1}
     (u_{xxx}^2 + \rho_{xxx}^2)dx
  + \int_0^\tau\int_{x_0}^{x_1}
     u_{xxxx}^2dxds
 \le
   C_{x_0,x_1}\delta_6^2,\quad   \tau\in[0,T_{2,d}],    \label{boundary.a.3.3}
 \ee
with $T_{2,d}$ determined by
$$
T_{2,d}=:\min\{\,T_{2,c},\
     (C_{x_0,x_1}(1+\delta_4^2+\delta_5^2)^2)^{-1}\delta_6^2,\
     (C_{x_0,x_1}(1+\delta_4^2+\delta_5^2))^{-1}\ln2\}.\nnm
$$
By \ef{boundary.a.3.3} and $\ef{3.2}_2$, we have
\be
 \int_{x_0}^{x_1}u_{x\tau}^2dx
  +  \int_0^\tau \int_{x_0}^{x_1} u_{xxs}^2 dxds
  \le   C_{x_0,x_1}\delta_6^2,  \quad \tau\in[0,T_{2,d}].       \label{boundary.a.3.4}
\ee
The combination of \ef{boundary.a.3.3} and \ef{boundary.a.3.4} yields \ef{boundary.a.3} with
$T_2=T_{2,d}$.
\end{proof}

\begin{lemma} \label{boundary.b}
Under the same assumptions as Lemma~\ref{lm3.4}, there is a time
$T_3\in(0, T_2]$ so that it holds for
$(x_2,\tau)\in(x_0,x_1)\times[0,T_3]$
\bgr
  \int_{x_2}^1 u^{2k}(x,\tau)dx
  +\int_0^\tau\int_{x_2}^1 \rho^{\theta+1}u^{2k-2}r^{2N-2}u_x^2dxds
  \le C_7,\quad k=1,2,\cdots , 2m, \label{boundary.b.1}
 \\
   \int_{x_2}^1(\rho^{\theta+1}u_x^2+\rho^{\theta-1}u^2)dx
   +\int_0^\tau\int_{x_2}^1u^2_{s}dxds
   \le C_8+\int_0^\tau\int_{x_2}^1 \rho^{\theta+3}u_x^4 dxds, \label{boundary.b.2}
 \\
   \int_{x_2}^1 \rho^{\theta+3}u_x^4 dx
   \le C_9+C(\int_{x_2}^1(\rho^{\theta+1}u_x^2+\rho^{\theta-1}u^2+u^2_{\tau})dx)^2,
   \label{boundary.b.3}
 \\
   \int_{x_2}^1 u^2_{\tau}dx
   +\int_0^\tau\int_{x_2}^1 (\rho^{\theta+1} r^{2N-2}u^2_{xs}
   +\rho^{\theta-1}r^{-2}u^2_{s})dxds
   \le C_{10}, \label{boundary.b.4}
  \egr
 where $C_i=C_i(\|\rho_0\|_{H^1([x_0, 1])}, \|u_0\|_{H^2([x_0,1])})>0$ are
 constants,
 $i=7, 8, 9, 10$.
\end{lemma}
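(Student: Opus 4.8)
The plan is to derive the four bounds in the stated order by weighted energy estimates on $[x_2,1]$ adapted to the degeneracy $\rho(1,\tau)=0$, and then to close the coupled system \ef{boundary.b.2}--\ef{boundary.b.4} by a short--time continuity argument producing $T_3\in(0,T_2]$. Throughout, the boundary contributions at $x=1$ coming from integration by parts vanish because $\rho(1,\tau)\equiv0$, because $\rho_\tau=-\rho^2(r^{N-1}u)_x$ also vanishes there, and because $|\rho r^{N-1}u_x|\le 2M_0$: each such term carries a strictly positive power of $\rho\sim(1-x)^\beta$ (rigorously one integrates over $[x_2,1-\delta]$ and lets $\delta\downarrow 0$), while boundary contributions at $x=x_2$ are controlled by Lemma~\ref{boundary.a}, since $x_2\in(x_0,x_1)$. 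For \ef{boundary.b.1} I would multiply $\ef{3.2}_2$ by $r^{N-1}u^{2k-1}$ and integrate over $[x_2,1]$, so that the inertial term yields $\frac{1}{2k}\frac{d}{d\tau}\int_{x_2}^1 u^{2k}dx$ and integration by parts in the flux $(\rho^\gamma-\theta\rho^{\theta+1}(r^{N-1}u)_x)_x$ produces the dissipation $\theta(2k-1)\int_{x_2}^1\rho^{\theta+1}r^{2N-2}u^{2k-2}u_x^2\,dx$; the remaining terms (those carrying $(\rho^\theta)_x$, $\rho^{\gamma-1}r^{-1}$, $r^{N-2}$ weights) are bounded by Young's inequality, absorbing a small multiple of the dissipation and estimating the rest via Lemma~\ref{lm3.1}, \ef{3.10a}, Lemma~\ref{lm3.4}, the moment estimate at level $k-1$, and the pointwise bounds on $\rho,r$; the weighted lower order integrals that arise (e.g.\ $\rho^{\theta-1}$, $\rho^{2\theta-2}$) are finite thanks to $A_1$--$A_3$ and $\rho\le 2\rho_+(1-x)^\beta$. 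An induction on $k$ (base $k=1$; the datum $u_0^{4m}\in L^1([x_2,1])$ supplies the initial term up to $k=2m$) together with Gronwall's inequality yields \ef{boundary.b.1} on a short interval.

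For \ef{boundary.b.2} I would take the inner product of $\ef{3.2re}$ with $u_\tau$ over $[x_2,1]$. The term $r^{1-N}u_\tau^2$ gives $\int u_\tau^2$ (up to the bounded weight $r^{1-N}$), and integrating $\theta\rho^{\theta+1}r^{N-1}u_x u_{x\tau}$ by parts in time yields $\frac{d}{d\tau}\int\frac\theta2\rho^{\theta+1}r^{N-1}u_x^2\,dx$ plus a remainder from $\partial_\tau(\rho^{\theta+1}r^{N-1})=-(\theta+1)\rho^{\theta+2}(r^{N-1}u)_x r^{N-1}+(N-1)\rho^{\theta+1}r^{N-2}u$; the leading piece of this remainder is the cubic term $\sim\rho^{\theta+2}r^{2N-2}u_x^3$, which by Young splits as $\le\varepsilon\rho^{\theta+3}u_x^4+C\rho^{\theta-1}$ (the latter integrable since $\beta(\theta-1)>-1$), so that precisely $\int_0^\tau\int_{x_2}^1\rho^{\theta+3}u_x^4$ survives on the right of \ef{boundary.b.2}. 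The terms $(1-\theta)(N-1)(\rho^\theta)_x\frac ur u_\tau$ and $-\theta(N-1)\rho^\theta(\frac ur)_x u_\tau$ are treated by Young's inequality with \ef{3.10a} and $|\rho r^{N-1}u_x|\le 2M_0$; the $(\frac ur)_x$ piece, which carries $\rho^{\theta-1}u/r^{N+1}$, together with Lemma~\ref{lm3.1}, is what supplies the term $\rho^{\theta-1}u^2$ on the left of \ef{boundary.b.2}. Integrating in $\tau$ gives \ef{boundary.b.2}.

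For \ef{boundary.b.3} I would apply the one--dimensional interpolation $\|f\|_{L^\infty([x_2,1])}^2\le C\|f\|_{L^2}\|f\|_{H^1}\le C\|f\|_{H^1}^2$ to the $\rho$--weighted combination $f$ appearing naturally in the equation (essentially $f=\rho^{\theta+1}(r^{N-1}u)_x$): using $\rho\le2\rho_+$, its $L^2([x_2,1])$ norm is bounded by $\int_{x_2}^1(\rho^{\theta+1}u_x^2+\rho^{\theta-1}u^2)dx$, while $f_x$ is, by $\ef{3.2}_2$, a linear combination of $(\rho^\gamma)_x$, $r^{1-N}u_\tau$ and $\frac{N-1}{r}(\rho^\theta)_x u$, whose $L^2$ norms are controlled by \ef{3.10a}, Lemma~\ref{lm3.4} and $\int_{x_2}^1u_\tau^2 dx$; writing $\int_{x_2}^1\rho^{\theta+3}u_x^4 dx\le C\|f\|_{L^\infty}^2\int_{x_2}^1(\text{weighted }u_x^2)dx$ then produces the square on the right of \ef{boundary.b.3}. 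For \ef{boundary.b.4} I would differentiate $\ef{3.2re}$ in $\tau$ (as in \ef{boundary.a.2.1}), take the inner product with $u_\tau$ over $[x_2,1]$, and obtain $\frac12\frac{d}{d\tau}\int r^{1-N}u_\tau^2$ plus the good dissipation $\int(\rho^{\theta+1}r^{2N-2}u_{x\tau}^2+\rho^{\theta-1}r^{-2}u_\tau^2)$, with the remaining terms bounded by Young's inequality using \ef{boundary.b.1}, \ef{boundary.b.2}, \ef{3.10a} and Lemma~\ref{lm3.4}. Finally, inserting \ef{boundary.b.3} into \ef{boundary.b.2} turns \ef{boundary.b.2}--\ef{boundary.b.4} into a closed family of inequalities of the form $\frac{d}{d\tau}\mathcal E\le C\mathcal E^2+C$ (modulo dissipation terms of the correct sign) for $\mathcal E(\tau)=\int_{x_2}^1(\rho^{\theta+1}u_x^2+\rho^{\theta-1}u^2+u_\tau^2)dx$, and a standard continuity argument delivers a time $T_3\in(0,T_2]$ on which $\mathcal E$ and the associated dissipation integrals stay bounded, hence \ef{boundary.b.2}--\ef{boundary.b.4}.

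The delicate point, and the main obstacle, is the uniform treatment of the degeneracy of $\rho$ at $x=1$: one must check that every weighted quantity that appears ($\rho^{\theta-1}$, $\rho^{\theta+3}u_x^4$, $(\rho^\theta)_x r^{N-1}$, $(\rho^\gamma)_x r^{N-1}$, the traces at $x=1$, and so on) is finite and controllable, which is exactly where the constraints $A_1$--$A_3$ and the profile $\rho\sim(1-x)^\beta$ enter, and that the quartic term couples \ef{boundary.b.2}--\ef{boundary.b.4} only quadratically, so that the loop closes on a short interval by a continuity argument rather than by direct absorption.
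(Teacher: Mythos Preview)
Your overall plan for \ef{boundary.b.1}, \ef{boundary.b.2}, \ef{boundary.b.4} and the short--time closure is essentially the paper's; the only cosmetic difference is that the paper localizes with a cutoff $\phi=\psi^2$ supported in $[x_0,1]$ (so that the contribution near $x_2$ is handled via Lemma~\ref{boundary.a} through the terms containing $\phi_x$), whereas you integrate directly on $[x_2,1]$ and invoke Lemma~\ref{boundary.a} for the trace at $x_2$. Both are fine.

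The genuine gap is in your argument for \ef{boundary.b.3}. Your plan is to set $f=\rho^{\theta+1}(r^{N-1}u)_x$, control $\|f\|_{L^\infty}$ by the one--dimensional interpolation $\|f\|_{L^\infty}^2\le C\|f\|_{H^1}^2$, and then write $\int_{x_2}^1\rho^{\theta+3}u_x^4\,dx\le C\|f\|_{L^\infty}^2\int_{x_2}^1(\text{weighted }u_x^2)\,dx$. The weight bookkeeping does not close: since $\rho^{\theta+1}r^{N-1}u_x=f-(N-1)\rho^{\theta}u/r$, one gets
\[
\rho^{\theta+3}u_x^4=\rho^{1-\theta}r^{2-2N}\bigl(\rho^{\theta+1}r^{N-1}u_x\bigr)^2u_x^2
\le C\rho^{1-\theta}|f|^2u_x^2+C\rho^{\theta+1}u^2u_x^2,
\]
and the first term forces you to control $\int_{x_2}^1\rho^{1-\theta}u_x^2\,dx$, which near $x=1$ is strictly more singular than the available $\int_{x_2}^1\rho^{\theta+1}u_x^2\,dx$ (the discrepancy is a factor $\rho^{-2\theta}\sim(1-x)^{-2\theta\beta}$, unbounded). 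A global $\|f\|_{L^\infty}$ bound cannot recover the missing decay.

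What makes the paper's proof of \ef{boundary.b.3} work is precisely a pointwise (not sup--norm) representation: one integrates $\ef{3.2}_2$ from $x$ to $1$ and uses $\rho(1,\tau)=0$ to write $u_x$ explicitly as in \ef{boundary.c.1.1}, so that the dangerous pieces carry factors like $\rho^{-1-\theta}\int_x^1 r^{1-N}u_\tau\,dy$. After raising to the fourth power with weight $\rho^{\theta+3}$ one faces $\rho^{-3\theta-1}\bigl(\int_x^1 r^{1-N}u_\tau\,dy\bigr)^4$, and Cauchy--Schwarz supplies the crucial extra factor $(1-x)^2$ from the integral, which exactly offsets the $\rho^{-3\theta-1}\sim(1-x)^{-\beta(3\theta+1)}$ singularity under $A_2$. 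Equivalently: since $f(1)=0$, one should use $|f(x)|^2\le(1-x)\int_x^1 f_y^2\,dy$ pointwise, not the global interpolation inequality. Replacing your interpolation step by this integrated representation fixes the argument and reproduces the quadratic right--hand side of \ef{boundary.b.3}.
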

\begin{proof}
We apply the arguments used in [3] to show \ef{boundary.b.1}. First
we consider the case of $k=1$. From Lemma~\ref{lm3.1}, we obtain
\ef{boundary.b.1} with $k=1$ easily. Assume \ef{boundary.b.1} holds
for $k=l-1$,
\be
  \int_{x_2}^1u^{2(l-1)}dx
  +\int_0^{\tau}\int_{x_2}^1\rho^{\theta+1}u^{2(l-1)-2}r^{2N-2}u_x^2dxds
  \le C.
\ee
Now we need to prove \ef{boundary.b.1} holds for $k=l$. Multiplying $\ef{3.2}_2$ by $\phi u^{2l-1}$ and integrating over $x$ from 0 to 1, where $\phi=\psi^2(x)$ and $\psi\in C^\infty([0,1])$ satisfies $0\leq\psi\leq1$, $\psi=1$ for $x\in[x_2,1]$, and $\psi=0$ for $x\in[0,x_0]$. We can obtain
\bma
 & \frac{d}{d\tau}\int_0^1\frac{1}{2l}\phi u^{2l}dx
 \nnm\\
   =&-\theta\int_0^1\phi\rho^{\theta+1}(r^{N-1}u^{2l-1})_x(r^{N-1}u)_xdx
     +\int_0^1\phi\rho^{\gamma}(r^{N-1}u^{2l-1})_xdx
 \nnm\\
   &+(N-1)\int_0^1\phi\rho^{\theta}(r^{N-2}u^{2l})_xdx
   -\theta\int_0^1\rho^{\theta+1}r^{N-1}u^{2l-1}\phi_x(r^{N-1}u)_xdx
 \nnm\\
   &+\int_0^1\rho^{\gamma}r^{N-1}u^{2l-1}\phi_xdx
   +(N-1)\int_0^1\rho^{\theta}r^{N-2}u^{2l}\phi_xdx
   \nnm\\
 :=&H_1+H_2+H_3+H_4+H_5+H_6.
     \label{boundary.b.1.1}
\ema
Set
\be
   B_1^2=\rho^{\theta+1}u^{2l-2}r^{2N-2}u_x^2\geq0,\quad B_2^2=\rho^{\theta-1}r^{-2}u^{2l}\geq0,\nnm
\ee
thus
\bma
   H_1+H_3=&
   -\theta(2l-1)\int_0^1\phi B_1^2dx-
     2(N-1)(\theta-1)l\int_0^1\phi B_1B_2dx
     \nnm\\
      &+((N-1)(N-2)-\theta(N-1)^2)\int_0^1\phi B_2^2dx.\nnm
\ema
Inserting this to \ef{boundary.b.1.1} and using Young's inequality, we get
 \bma
 &\frac{d}{d\tau}\int_0^1\frac{1}{2l}\phi u^{2l}dx
    +\theta(2l-1)\int_0^1\phi B_1^2dx
    \nnm\\
 \le
 &\varepsilon\int_0^1\phi B_1^2dx+C_{\varepsilon}\int_0^1\phi B_2^2dx+
   \int_0^1\phi\rho^{\gamma}(r^{N-1}u^{2l-1})_xdx+|H_4|+|H_5|+|H_6|.
     \label{boundary.b.1.2}
 \ema
By a direct computation, it follows
\bma
 &\frac{d}{d\tau}\int_0^1\phi u^{2l}dx+
  \int_0^1\phi\rho^{\theta+1}u^{2l-2}r^{2N-2}u_x^2dx
 \nnm\\
\le
 &C_{x_0}(1+M_{1,a}^{2l}+M_{1,a}^{2l-1}M_{0,a}+M_1^{2l-2})
   +C_{x_0}(1+M_1^2)\int_0^1\phi u^{2l}dx,\nnm
\ema
using Gronwall's inequality and the fact that
$\phi=1$ for $x\in[x_2,1]$, we get
\be
  \int_{x_2}^1u^{2l}dx+
    \int_0^{\tau}\int_{x_2}^{1}\rho^{\theta+1}u^{2l-2}r^{2N-2}u_x^2dxds
      \le C, \quad\tau\in[0,T_{3,a}],  \label{boundary.b.1.3}
\ee
with $T_{3,a}$ determined by
$$
T_{3,a}=:\min\{(C_{x_0}(1+M_{1,a}^{2l}+M_{1,a}^{2l-1}
M_{0,a}+M_1^{2l-2}))^{-1},(C_{x_0}(1+M_1^2))^{-1}\ln2\},\nnm
$$
we get \ef{boundary.b.1} immediately.

To show \ef{boundary.b.2} we multiplying $\ef{3.2}_2$
by $\phi r^{N-1}u_{\tau}$, where $\phi=\psi^2(x)$ and
$\psi\in C^\infty([0,1])$ satisfies $0\leq\psi\leq1$,
$\psi=1$ for $x\in[x_2,1]$, and $\psi=0$ for $x\in[0,x_0]$,
integrate it over $[0,1]\times[0,\tau]$ to have
\bma
  &\int_0^{\tau}\int_0^1\phi u^2_{s}(x,s)dxds
\nnm\\
  =&\int_0^{\tau}\int_0^1\phi\rho^{\gamma}(u_{s}r^{N-1})_xdxds
     -\theta\int_0^{\tau}\int_0^1\phi\rho^{\theta+1}
      (r^{N-1}u)_x(u_{s}r^{N-1})_xdxds
\nnm\\
   &+(N-1)\int_0^{\tau}\int_0^1\phi\rho^{\theta}(r^{N-2}uu_{s})_xdxds
     +\int_0^{\tau}\int_0^1\phi_x\rho^{\gamma}u_{s}r^{N-1}dxds
\nnm\\
   &-\theta\int_0^{\tau}\int_0^1\phi_x\rho^{\theta+1}
     (r^{N-1}u)_xu_{s}r^{N-1}dxds
     +(N-1)\int_0^{\tau}\int_0^1\phi_x\rho^{\theta}r^{N-2}uu_{s}dxds
  \nnm\\
  \leq&
  C\delta_4^2+C\int_0^{\tau}\int_0^1\phi\rho^{\theta+3}u_x^4dxds
  -C\int_0^1\phi(\rho^{\theta+1}u_x^2+\rho^{\theta-1}u^2)dx,\nnm
\ema
which implies
\bma
  &\int_0^{\tau}\int_{x_2}^1u_{s}^2(x,s)dxds
    +\int_{x_2}^1(\rho^{\theta+1}u_x^2+\rho^{\theta-1}u^2)dx\nnm\\
 \le&
    C+C\int_0^{\tau}(\int_{x_0}^{x_2}+
    \int_{x_2}^1)\phi\rho^{\theta+3}u_x^4dxds
\nnm\\
  \le&
    C+C\int_0^{\tau}\int_{x_2}^1\rho^{\theta+3}u_x^4dxds,
\ema
last we get \ef{boundary.b.2}, where we use the Lemma~\ref{lm3.2}, Lemma~\ref{boundary.a}, \ef{3.10a} and $ m>\frac{3}{4(1+\beta\theta-\beta)}$.

Now we begin to prove \ef{boundary.b.3}.
Multiplying $\ef{3.2}_2$ with $\phi$, where $\phi=\psi^2(x)$ and $\psi\in C^\infty([0,1])$ satisfies $0\le\psi\le1$, $\psi=1$ for $x\in[x_2,1]$, and $\psi=0$ for $x\in[0,x_0]$, then integrating over $[x,1]$, we have
\bma
   \phi u_x
   =&\frac{1}{\theta}\phi r^{1-N}\rho^{\gamma-\theta-1}
    -(N-1)\phi\rho^{-1}ur^{-N}
    +\frac{1}{\theta}(N-1)\phi\rho^{-1}ur^{-N}
\nnm\\
   &\,+\frac{1}{\theta}(N-1)\rho^{-1-\theta}r^{1-N}
    \int_x^1\phi(\frac{u}{r})_y\rho^{\theta}dy
     -\frac{1}{\theta}\rho^{-1-\theta}r^{1-N}
        \int_x^1\phi r^{1-N}u_{\tau}dy
\nnm\\
   &\,+\frac{1}{\theta}\rho^{-1-\theta}r^{1-N}\int_x^1
    \phi_y(\rho^{\gamma}-\theta\rho^{\theta+1}(r^{N-1}u)_y)dy
\nnm\\
   &\,+\frac{1}{\theta}(N-1)\rho^{-1-\theta}r^{1-N}
     \int_x^1\rho^{\theta}\phi_y\frac{u}{r}dy,\nnm
\ema
integrating it over $[0,1]$, we have
\bma
   &\int_0^1\rho^{\theta+3}\phi^4u_x^4dx
\nnm\\
  \le
   &\, C\int_0^1\phi^4\rho^{4\gamma-3\theta-1}dx
      +C\int_0^1\phi^4\rho^{\theta-1}u^4dx
        +C\int_0^1\rho^{-3\theta-1}(\int_x^1\phi
         (\frac{u}{r})_y\rho^{\theta}dy)^4dx
\nnm\\
   &\,+C\int_0^1\rho^{-3\theta-1}(\int_x^1r^{1-N}\phi u_{s}dy)^4dx
     +C\int_0^1\rho^{-3\theta-1}(\int_x^1\rho^{\theta}\phi_y\frac{u}{r}dy)^4dx
\nnm\\
   &\,+C\int_0^1\rho^{-3\theta-1}(\int_x^1\phi_y
    (\rho^{\gamma}-\theta\rho^{\theta+1}(r^{N-1}u)_y)dy)^4dx
    \nnm\\
\le& C+C(\int_0^1\phi^2\rho^{\theta+1}u_x^2dx)^2
      +C(\int_0^1\phi^2\rho^{\theta-1}u^2dx)^2
      +C(\int_0^1\phi^2u_{\tau}dy)^2
\ema
which implies
\be
   \int_0^1\phi^4\rho^{\theta+3}u_x^4dx
    \le
     C+C(\int_0^1\phi^2(\rho^{\theta-1}u^2
      +u_{\tau}^2+\rho^{\theta+1}u_x^2)dx)^2,\nnm
\ee
and
\be
   \int_{x_2}^1\rho^{\theta+3}u_x^4d
    \leq
     C+C(\int_{x_2}^1(\rho^{\theta-1}u^2
      +u_{\tau}^2+\rho^{\theta+1}u_x^2)dx)^2.\nnm
\ee

Finally we prove \ef{boundary.b.4}. Differentiating equation ${\ef{3.2}_2}$ respect to ${\tau}$,
\be
     u_{\tau\tau}
    =(r^{N-1}(\theta\rho^{\theta+1}(r^{N-1}u)_x
      -\rho^{\gamma})_x)_{\tau}-(N-1)(r^{N-2}u(\rho^{\theta})_x)_{\tau}
    =I+J,     \label{boundary.b.4.1}
\ee
where
\be
   I=r^{N-1}(\theta\rho^{\theta+1}(r^{N-1}u_{\tau}))_x
     -(N-1)r^{N-2}u_{\tau}(\rho^{\theta})_x,
     \nnm\\
\ee
\bma
   J=&(r^{N-1}(\theta\rho^{\theta+1}(r^{N-1}u)_x
      -\rho^{\gamma})_x)_{\tau}-
      r^{N-1}(\theta\rho^{\theta+1}(r^{N-1}u_{\tau}))_x\nnm\\
      &-(N-1)(N-2)r^{N-3}u^2(\rho^{\theta})_x
      -(N-1)r^{N-2}u(\rho^{\theta})_{x\tau}.\nnm
\ema

Multiplying \ef{boundary.b.4.1} by $\phi u_{\tau}$ and integrating over $x$ from 0 to 1, where $\phi=\psi^2(x)$ and $\psi\in C^\infty([0,1])$ satisfies $0\leq\psi\leq1$, $\psi=1$ for $x\in[x_2,1]$, and $\psi=0$ for $x\in[0,x_0]$, we have
\bma
   &\frac{d}{d\tau}\int_0^1 \frac{\phi}{2}u_{\tau}^2dx
     +(\mbox{$\frac{1}{N}$}+\theta-1)\int_0^1
      \phi\rho^{\theta+1}[(r^{N-1}u_{\tau})_x]^2dx
       +\mbox{$\frac{N-1}{N}$}\int_0^1\phi\rho^{\theta+1}(r^{N-1}u_{x\tau}
        -\rho^{-1}r^{-1}u_{\tau})^2dx
\nnm\\
   =&-\int_0^1\theta\phi_x\rho^{\theta+1}r^{N-1}u_{\tau}(r^{N-1}u_{\tau})_xdx
    +(N-1)\int_0^1\rho^{\theta}\phi_xr^{N-2}u_{\tau}^2dx
      +\int_0^1\phi Ju_{\tau}dx
      \nnm\\
\le
  &\,\varepsilon\int_0^1\phi\rho^{\theta+1}r^{2(N-1)}u_{x\tau}^2dx
      +\varepsilon\int_0^1\phi\rho^{\theta-1}u_{\tau}^2r^{-2}dx
       +C_\varepsilon\int_0^1\phi\rho^{\theta+3}u_x^4dx
\nnm\\
  &\,+C\int_0^1\phi u^{4m}dx+C\int_0^1\phi\rho^{(\theta-1)\frac{m}{m-1}}
       +C\int_0^1\phi \rho^{3(1-\theta)}+C,\nnm
\ema
after integrating over [0,$\tau$] and choosing proper $\varepsilon$, which implies
\bma
 &\int_{x_2}^1u_{\tau}^2dx
   +\int_0^{\tau}\int_{x_2}^1(\rho^{\theta+1}r^{2N-2}u_{xs}^2
     +\rho^{\theta-1}r^{-2}u_{s}^2)dxds
\nnm\\
  \le
  &\,C\int_0^{\tau}\int_{x_2}^1\rho^{\theta+3}u_x^4dxds
        +\int_{x_0}^1u_{\tau}^2(x,0)dx+C
\nnm\\
   \le
   &\,C\int_0^{\tau}(\int_{x_2}^1(\rho^{\theta-1}u^2
     +u_{s}^2+\rho^{\theta+1}u_x^2)dx)^2ds+C,\nnm
\ema
Using \ef{boundary.b.2} and above we have
\bma
  &\int_{x_2}^1(\rho^{\theta+1}u_x^2
    +\rho^{\theta-1}u^2+u_{\tau}^2)dx
     +\int_0^{\tau}\int_{x_2}^1(\rho^{\theta+1}r^{2N-2}u_{xs}^2
      +\rho^{\theta-1}r^{-2}u_{s}^2)dxds
\nnm\\
  \le
  &\,C_{11}+C_{12}\int_0^{\tau}(\int_{x_2}^1(\rho^{\theta+1}u_x^2
    +\rho^{\theta-1}u^2+u_{s}^2)dx)^2ds,\nnm
\ema
by Gronwall's inequality, we get
\be
   \int_{x_2}^1(\rho^{\theta+1}u_x^2
      +\rho^{\theta-1}u^2+u_{\tau}^2)dx+
       \int_0^{\tau}\int_{x_2}^1(\rho^{\theta+1}r^{2N-2}u_{xs}^2
        +\rho^{\theta-1}r^{-2}u_{s}^2)dxds
  \leq2C_{11},\quad 0\leq\tau\leq T_3,\nnm
\ee
where
\be
T_3=:\min\{T_2,T_{3,a},\frac{1}{2C_{11}C_{12}}\},\nnm
\ee
thus we get \ef{boundary.b.4}.
\end{proof}

\begin{lemma} \label{boundary.c}
Under the same assumptions as Lemma~\ref{lm3.4}, the solution $(\rho,u,a)$ satisfies for the $x_2\in(x_0,x_1)$ and $\tau\in[0,T_3]$ that
\be
  \int_{x_2}^1|u_x|^{\lambda_0}dx\le C, \label{boundary.c.1}
\ee
where ${\lambda_0}$ is a constant satisfying:
\be
 1<\lambda_0<\min\{\frac{4m}{4m\beta+1},\frac{1}{\beta(\theta+1)}\}.
  \label{boundary.c.2}
\ee
In addition, it holds
\be
 \mid \rho r^{N-1}u_x(x,\tau)\mid\leq M_{0,b},
  \quad\mid u(x,\tau)\mid\leq M_{1,b},
   \quad(x,\tau)\in[x_2,1]\times[0,T_3],\label{boundary.c.3}
\ee
where $M_{0,b}$ and $M_{1,b}$ are given by \ef{boundary.c.3.4} and \ef{boundary.c.3.2} respectively, and
\be
  \rho\in C^0([x_2,1]\times[0,T_3]),\quad
  u\in C^0([x_2,1]\times[0,T_3]).\label{boundary.c.4}
\ee
\end{lemma}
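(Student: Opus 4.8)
The plan is to deduce the $L^{\lambda_0}$-bound on $u_x$ over $[x_2,1]$ directly from the pointwise representation formula for $\phi u_x$ derived in the proof of Lemma~\ref{boundary.b} (the one obtained by integrating $\ef{3.2}_2$ over $[x,1]$), together with the weighted estimates \ef{boundary.b.1}, \ef{boundary.b.2} and \ef{boundary.b.4}. First I would fix $\lambda_0$ obeying \ef{boundary.c.2}; the constraints there are exactly what is needed so that each term in that representation, after raising to the power $\lambda_0$ and integrating in $x$, is controlled by a power of $\rho\sim(1-x)^\beta$ that is integrable near $x=1$, with the remaining factors handled by the already-proven energy bounds. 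Concretely, the leading term $\frac1\theta\phi r^{1-N}\rho^{\gamma-\theta-1}$ contributes $\int_{x_2}^1\rho^{\lambda_0(\gamma-\theta-1)}\,dx$, which is finite provided $\lambda_0\beta(\theta+1-\gamma)<1$; since $\theta<\gamma$ (assumption $A_1$) this is weaker than $\lambda_0\beta(\theta+1)<1$. The terms involving $\rho^{-1}ur^{-N}$ and the $\rho^{-1-\theta}r^{1-N}\int_x^1(\cdot)\,dy$ pieces are estimated by H\"older in $x$: bound the inner integral in $L^\infty_x$ (or by Cauchy--Schwarz) using \ef{boundary.b.1} with a suitably large $k\le 2m$ for the $u$-factors and \ef{boundary.b.4} for the $u_\tau$-factor, leaving an outer integral $\int_{x_2}^1\rho^{-\lambda_0(1+\theta)}\,dx$ that converges precisely because $\lambda_0\beta(1+\theta)<1$; the upper cutoff $\lambda_0<\frac{4m}{4m\beta+1}$ is what makes the $u^{2k}$-factors with $k=2m$ combine correctly after applying H\"older with the conjugate exponent. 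The $\phi_y$-terms are supported in $[x_0,x_2]$ where $\rho$ is bounded below and above, so they are harmless and absorbed by Lemma~\ref{boundary.a} and Lemma~\ref{lm3.4}. This yields \ef{boundary.c.1}.

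Next I would upgrade \ef{boundary.c.1} to the pointwise bound \ef{boundary.c.3}. Since $\lambda_0>1$, Morrey/Sobolev embedding in one dimension gives that $u$ is H\"older continuous on $[x_2,1]$ with $\|u\|_{C^{0,1-1/\lambda_0}([x_2,1])}$ controlled by $\|u_x\|_{L^{\lambda_0}}+\|u\|_{L^1}$; combined with $u(0,\tau)=0$ propagated through \ef{6.v_0b} this produces $M_{1,b}$. For $M_{0,b}$, integrate $\ef{3.2}_2$ in the form used in Lemma~\ref{boundary.b} and use \ef{boundary.c.1} together with the bounds on $u_\tau$ from \ef{boundary.b.4} to estimate $\rho^\theta(r^{N-1}u)_x$, hence $\rho r^{N-1}u_x = \rho^{1-\theta}\cdot\rho^\theta(r^{N-1}u)_x - (N-1)\rho u/r$, which is bounded on $[x_2,1]\times[0,T_3]$ after using $\rho\le\rho^*(1-x)^\beta$ and $\beta\theta<\frac12$ from $A_2$; this defines $M_{0,b}$ via \ef{boundary.c.3.4} and \ef{boundary.c.3.2}.

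Finally, for the continuity statement \ef{boundary.c.4}: the bound \ef{boundary.c.3} gives $\rho r^{N-1}u_x\in L^\infty$, so by the ODE representation \ef{rhoport} and the uniform bounds on $u$ and $ur^{-1}$, the map $\tau\mapsto\rho(\cdot,\tau)$ is Lipschitz into $C^0([x_2,1])$, while $\rho_0$ is continuous; hence $\rho\in C^0([x_2,1]\times[0,T_3])$. For $u$, the spatial H\"older continuity is uniform in $\tau$ by the $L^{\lambda_0}_x$-bound on $u_x$, and the temporal continuity follows from $\int_0^{T_3}\!\int_{x_2}^1 u_s^2\,dxds\le C$ in \ef{boundary.b.4}, giving equicontinuity in $\tau$ in $L^2_x$ which, interpolated with the uniform spatial H\"older bound, yields joint continuity. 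The main obstacle is the bookkeeping in the first step: one must check that the particular choice of exponent powers in the representation formula, the weights $\rho^{\pm(1+\theta)}$, $\rho^{\gamma-\theta-1}$, and the available integrability $u^{4m}\in L^1$ are all simultaneously compatible under exactly the two-sided constraint \ef{boundary.c.2} and assumptions $A_1$--$A_3$ — in particular verifying that $\lambda_0$ can be chosen strictly greater than $1$, which requires $\beta(\theta+1)<1$ and $4m\beta/(4m\beta+1)<1$, both guaranteed by $A_2$ and $A_3$.
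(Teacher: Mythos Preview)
Your plan for \ef{boundary.c.1} and \ef{boundary.c.3} is essentially the paper's: the same pointwise representation of $u_x$ obtained by integrating $\ef{3.2}_2$ over $[x,1]$, the same H\"older splitting against $\int u^{4m}dx$ and $\int u_\tau^2\,dx$, and the same two constraints on $\lambda_0$ arising from the integrability of $\rho^{-\frac{4m\lambda_0}{4m-\lambda_0}}$ and of $\rho^{-\lambda_0(1+\theta)}(1-x)^{\lambda_0/2}$ near $x=1$. The pointwise bound on $\rho r^{N-1}u_x$ is likewise obtained in the paper by multiplying that same representation through by $\rho r^{N-1}$ (your intermediate step through $\rho^\theta(r^{N-1}u)_x$ is equivalent after an integration by parts, modulo the slip $-(N-1)\rho u/r$ which should read $-(N-1)u/r$).

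The gap is in your argument for $\rho\in C^0$. From \ef{rhoport} together with $\rho r^{N-1}u_x\in L^\infty$ you only get that $\tau\mapsto\rho(\cdot,\tau)$ is Lipschitz with values in $L^\infty_x$, not in $C^0_x$: the increment $\rho(\cdot,\tau_2)-\rho(\cdot,\tau_1)=-\int_{\tau_1}^{\tau_2}\rho\,(\rho r^{N-1}u_x+(N-1)u r^{-1})\,ds$ is bounded but you have not shown it is continuous in $x$, since continuity of $\rho r^{N-1}u_x$ in $x$ is precisely part of what is to be proved. The paper avoids this circularity by a different route: it shows $\rho^\gamma\in L^\infty(0,T_3;H^1([x_2,1]))$ directly, using the integrated identity \ef{boundary.a.2.6} (i.e.\ $r^{N-1}(\rho^\theta)_x=r_0^{N-1}(\rho_0^\theta)_x-\int_0^\tau r^{N-1}(\rho^\gamma)_x\,ds-u+u_0$) together with a Gronwall argument to bound $\int_{x_2}^1\big(r^{N-1}(\rho^\gamma)_x\big)^2dx$, and couples this with $(\rho^\gamma)_\tau\in L^\infty(0,T_3;L^2)$ from $\ef{3.2}_1$. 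The pair $H^1_x\cap W^{1,\infty}_\tau L^2_x$ then gives $\rho^\gamma\in C^0$, hence $\rho\in C^0$. Your continuity argument for $u$ is fine (and the paper does essentially the same, using $u\in L^\infty(0,T_3;W^{1,p})$ for $p\in(1,\beta^{-1})$ from the bound on $\rho r^{N-1}u_x$ together with $u_\tau\in L^\infty(0,T_3;L^2)$).
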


\begin{proof}
From $\ef{3.2}_2$ and boundary conditions, we have
\bma
   u_x
  =&\frac{1}{\theta}r^{1-N}\rho^{\gamma-\theta-1}
    -(N-1)\rho^{-1}ur^{-N}
    +\frac{1}{\theta}(N-1)\rho^{-1}ur^{-N}
\nnm\\
   &+\frac{1}{\theta}(N-1)\rho^{-1-\theta}r^{1-N}
     \int_x^1(\frac{u}{r})_y\rho^{\theta}dy
     -\frac{1}{\theta}\rho^{-1-\theta}r^{1-N}\int_x^1r^{1-N}u_{\tau}dy, \label{boundary.c.1.1}
\ema
from \ef{boundary.c.1.1}, we obtain
\bma
   \int_{x_2}^1|u_x|^{\lambda_0}dx
   \le
   &\,C\int_{x_2}^1(1-x)^{\lambda_0\beta(\gamma-\theta-1)}dx
     +C(\int_{x_2}^1\rho^{-\frac{4m}
     {4m-\lambda_0}\lambda_0}dx)^{\frac{4m-\lambda_0}{4m}}
    (\int_{x_2}^1u^{4m}dx)^{\frac{\lambda_0}{4m}}
\nnm\\
   &\,
    +C\|u_{\tau}\|_{L^2[x_2,1]}^{\lambda_0}\int_{x_2}^1
      (1-x)^{\lambda_0(\frac{1}{2}-\beta(\theta+1))}dx
\nnm\\
    &\,+C(\int_{x_2}^1\rho^{\theta+3}u_y^4dy)^{\frac{\lambda_0}{4}}
      \int_{x_2}^1|\rho^{-\theta-1}
     (\int_x^1\rho^{\theta-1}dy)^{\frac{3}{4}}|^{\lambda_0}dx
\nnm\\
    &\,+C(\int_{x_2}^1\rho^{\theta-1}u^2dy)^{\frac{\lambda_0}{2}}
     \int_{x_2}^1|\rho^{-\theta-1}(\int_x^1\rho^{\theta-1}dy)
      ^{\frac{1}{2}}|^{\lambda_0}dx\leq C_{13},
\label{boundary.c.1.2}
\ema
so we obtain \ef{boundary.c.1}, where we use the fact $\lambda_0<\min\{\frac{1}{\beta(\theta+1)}, \frac{4m}{4m\beta+1}\}$.
Then we have
\bma
  \|u\|_{L^\infty([x_2,1]\times[0,T_3])}
  \le
  &\,\frac{1}{1-x_2}\int_{x_2}^1|u|dx
     +\int_{x_2}^1|u_x|dx
\nnm\\
   \le
   &\,C_{x_2}E_0^{\frac{1}{2}}+
   (\int_{x_2}^1|u_x|^{\lambda_0}dx)^
{\frac{1}{\lambda_0}}(\int_{x_2}^1dx)^{\frac{\lambda_0-1}{\lambda_0}}
\nnm\\
\leq\
   &\,C_{x_2}E_0^{\frac{1}{2}}
   +C_{x_2}C_{13}^{\frac{1}{\lambda_0}}=:M_{1,b},\label{boundary.c.3.1}
\ema
with
\be
   M_{1,b}
  =:\max\{ C_{x_2}E_0^{\frac{1}{2}}+C_{x_2}C_{13}^{\frac{1}{\lambda_0}},\ M_{1,a}\}.
  \label{boundary.c.3.2}
\ee
From \ef{boundary.c.1.1} for $x\in[x_2,1]$ we have
\bma
    |\rho r^{N-1}u_x|
   =&\big|\frac{1}{\theta}\rho^{\gamma-\theta}-(N-1)ur^{-1}
    +\frac{1}{\theta}(N-1)ur^{-1}
\nnm\\
    &+\frac{1}{\theta}(N-1)\rho^{-\theta}\int_x^1(\frac{u}{r})_y\rho^{\theta}dy
    -\frac{1}{\theta}\rho^{-\theta}\int_x^1r^{1-N}u_{\tau}dy\big|
\nnm\\
   \le
  &\,C+C(\rho^*)^{\gamma-\theta}+C(1-x)^{\frac{1}{2}-\theta\beta}
    +C(1-x)^{\frac{1}{2}(\beta(\theta-1)+1)-\theta\beta}
\nnm\\
  \le
   &\,M_{0,b},\label{boundary.c.3.3}
\ema
with
\be
 M_{0,b}=:\max\{C+C(\rho^*)^{\gamma-\theta},M_{0,a}\},\label{boundary.c.3.4}
\ee
which and \ef{boundary.c.3.1} lead to \ef{boundary.c.3}.

We turn to prove \ef{boundary.c.4}. It is easy to verify
\be
  \rho^{\gamma}\in L^{\infty}(0,T_3,H^1([x_2,1]))
   ,\quad(\rho^{\gamma})_{\tau}\in L^{\infty}(0,T_3,L^2([x_2,1])).
   \label{boundary.c.4.1}
\ee
This implies $\rho^{\gamma}\in C^0([x_2,1]\times[0,T_3])$ and the continuity of density $\rho\in C^0([x_2,1]\times[0,T_3])$. Indeed, it follows from $\ef{3.2}_1$
\be
  (\rho^{\gamma})_{\tau}
  =-\gamma\rho^{\gamma+1}(r^{N-1}u)_x
  =-\gamma\rho^{\gamma+1}r^{N-1}u_x
   -\gamma(N-1)\rho^{\gamma}u r^{-1}
   \in L^{\infty}(0,T_3,L^2([x_2,1])).\label{boundary.c.4.2}
\ee
On the other hand, one derives from \ef{boundary.a.2.6}
\bma
    \int_{x_2}^1(r^{N-1}(\rho^{\gamma})_x)^2dx
    \le
    &\, C\int_{x_2}^1(\rho^{\gamma-\theta}\rho_{0x})^2dx
    +C(\rho^*)^{2(\gamma-\theta)}\int_0^{\tau}\int_{x_2}^1
    (r^{N-1}(\rho^{\gamma})_x)^2dxds
\nnm\\
  &\,+C(\rho^*)^{2(\gamma-\theta)}\int_{x_2}^1(u^2+u_0^2)dx
\nnm\\
  \le
  &\,C(\rho^*)^{2(\gamma-\theta)}+C(\rho^*)^{2(\gamma-\theta)}E_0
  \nnm\\
  &\
   +C(\rho^*)^{2(\gamma-\theta)}
   \int_0^{\tau}\int_{x_2}^1(r^{N-1}(\rho^{\gamma})_x)^2dxds,
   \label{boundary.c.4.3}
\ema
and then
\be
  \int_{x_2}^1(r^{N-1}(\rho^{\gamma})_x)^2dx
   \le
   C(\rho^*)^{2(\gamma-\theta)}(E_0+1)
    (1+C(\rho^*)^{2(\gamma-\theta)}e^{C(\rho^*)^{2(\gamma-\theta)}}),
    \label{boundary.c.4.4}
\ee
which implies
\be
  \int_{x_2}^1[(\rho^{\gamma})_x]^2dx
   \le
   C(\rho^*)^{2(\gamma-\theta)}(E_0+1)
   (1+C(\rho^*)^{2(\gamma-\theta)}e^{C(\rho^*)^{2(\gamma-\theta)}}),
   \label{boundary.c.4.5}
\ee
thus we obtain
\be
\rho^{\gamma}\in L^{\infty}(0,T_3,H^1([x_2,1])),\label{boundary.c.4.6}
\ee
this and \ef{boundary.c.4.2} gives the half of \ef{boundary.c.4}.
We can also show
\be
   u\in L^{\infty}(0,T_3,W^{1,p}([x_2,1])),
  \quad u_{\tau}\in L^{\infty}(0,T_3,L^2([x_2,1])),\label{boundary.c.4.7}
\ee
for any $p\in(1,\beta^{-1})$, so that
\be
  \sup_{\tau\in[0,T_3]}\|u_x\|_{L^p([x_2,1])}
    \le
    \sup_{t\in[0,T_4]}\|(\rho r^{N-1})^{-p}\|_{L^1([x_2,1])}
  \cdot\|\rho r^{N-1}u_x\|_{L^{\infty}([x_2,1]\times[0,T_3])}^p
       \leq C,\label{boundary.c.4.8}
\ee
and
\be
  \|u_{\tau}(\tau)\|_{L^2[x_2,1]}^2
 =\int_{x_2}^1u_{\tau}^2dx
  \le C,\label{boundary.c.4.9}
\ee
this and \ef{boundary.c.4.8} implies the continuity of velocity $u$ on $[x_2,1]\times[0,T_3]$.
\end{proof}

%------------------------------------------------------------------------

\section{Interior regularities}
It is convenient to make use of \ef{2.1o} directly to investigate the interior regularities of solutions. Indeed, we have

\def\uu{\mathbf{U}}

 \begin{lemma} \label{uniform.m}
Under the assumptions of Theorem~\ref{thm.existence}, there is a
time $T_4\in(0,T_3]$ so that the solution
$(\rho,U)(\mathbf{x},t)=(\rho(r),u(r,t)\frac{\mathbf{x}}{|\mathbf{x}|})$
to the FBVP \ef{2.1o} and \ef{2.1a} satisfies
 \bgr
 \|(\rho,\uu)(t)\|^2_{H^3(\Omega^{in}_t)}
 + \int_0^t  \|(\rho,\nabla\uu)(s)\|^2_{H^3(\Omega^{in}_s)} ds
  \le
  M, \quad t\in[0,T_4],  \label{lm3.9p}
  \egr
where $\Omega^{in}_t=:\{0\le|\mathbf{x}|\le r_{x_2}(t)\}$,
$r_{x_2}(t)$ is the particle path
with $r_{x_2}(0)=r_2\in(r_0,r_1)$, and
$M>0$ is a constant given by \eqref{Mz}. In particular, it holds
\bgr
 \|\nabla\uu(t)\|_{L^\infty(\Omega^{in}_t)}\le C_sM^{1/2},\quad
 \|\nabla\rho(t)\|_{L^\infty((\Omega^{in}_t))}\le M_2,
  \quad t\in[0,T_4],                         \label{lm3.9pd}
 \\
 \frac12\rho_0(r)\le \rho(r,t)\le 2 \rho_0(r),
 \quad  (r,t)\in [0,r_{x_2}(t)]\times[0,T_4],  \label{rhobdda}
\egr
with $C_s>0$ the Sobolev constant for $\|f\|_{L^\infty}\le
C_s\|f\|_{H^2}$.
\end{lemma}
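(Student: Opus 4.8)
We only sketch the argument. The plan is to work directly with the Cartesian system \ef{2.1o}, rather than with its spherically symmetric reduction, for two reasons: \ef{2.1o} carries no artificial singularity at the symmetry center $\mathbf{x}=0$, which lies in the interior of $\Omega^{in}_t$; and on $\Omega^{in}_t$ the density stays bounded away from zero (this is part of what must be shown), so that the momentum equation is a genuinely parabolic system for $\uu$ with coefficient $\rho^\theta\ge c_0^\theta>0$. The scheme is a continuity (bootstrap) argument: fix $M$ by the size of the data through \ef{2.11a2a} together with a Sobolev constant; assume the a-priori bounds $\|(\rho,\uu)(t)\|^2_{H^3(\Omega^{in}_t)}\le 2M$ and $\frac12\rho_0\le\rho\le 2\rho_0$ on $\Omega^{in}_t$ for $t\in[0,T_4]$; and then prove that on a short enough interval the same estimates hold with $2M$ replaced by $M$ and with improved density constants, which closes the argument.

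First I would record the pointwise density control. Integrating the continuity equation along particle paths gives $\rho(\mathbf{x},t)=\rho_0(\mathbf{x}_0)\exp\{-\int_0^t{\rm div}\,\uu\,ds\}$; the a-priori bound and $H^2(\Omega^{in}_t)\hookrightarrow L^\infty$ give $\|{\rm div}\,\uu\|_{L^\infty}\le C_sM^{1/2}$, hence $\rho_0e^{-C_sM^{1/2}t}\le\rho\le\rho_0e^{C_sM^{1/2}t}$, and choosing $T_4$ with $C_sM^{1/2}T_4\le\ln 2$ yields \ef{rhobdda}. Applying $\partial^\alpha$ with $|\alpha|\le 3$ to the continuity equation, the transported quantity $\partial^\alpha\rho$ satisfies $\partial_t\partial^\alpha\rho+\uu\cdot\nabla\partial^\alpha\rho=(\text{products of derivatives of }\rho,\uu\text{ of total order}\le 3)$, which by Moser-type inequalities is bounded in $L^2(\Omega^{in}_t)$ by $C(1+\|\uu\|_{H^3})\|\rho\|_{H^3}$; a Gronwall step then propagates the $H^3$ norm of $\rho$.

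For the velocity, rewrite the momentum equation on $\Omega^{in}_t$ as $\rho\uu_t+\rho\uu\cdot\nabla\uu-{\rm div}(\rho^\theta\mathbb{D}(\uu))-(\theta-1)\nabla(\rho^\theta{\rm div}\,\uu)+\nabla\rho^\gamma=0$, a uniformly parabolic system, and run the standard energy hierarchy: test against $\uu$ and $\uu_t$ to control $\|\uu(t)\|_{H^1}$ and $\int_0^t(\|\sqrt\rho\,\uu_s\|_{L^2}^2+\|\nabla\uu\|_{L^2}^2)ds$; apply $\nabla$ and test against $\nabla\uu_t$ to control $\|\nabla\uu(t)\|_{H^1}$ and $\int_0^t\|\nabla\uu_s\|_{L^2}^2ds$; then continue up to third order, using the momentum equation as an elliptic system for $\uu$ to convert a time derivative into two spatial derivatives where needed, so as to upgrade to full $H^3$ control of $\uu$ and the dissipation bound $\int_0^t\|\nabla\uu\|_{H^3}^2ds$. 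The pressure forcing $\nabla\rho^\gamma$, the convective term, and the commutators generated by the $\rho^\theta$ and $\rho$ coefficients are all handled by composition/product estimates in $H^2\hookrightarrow L^\infty$ together with the density bounds, and are dominated by a polynomial in $M$.

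The only nonroutine point, and the main obstacle, is the moving outer boundary $\{r=r_{x_2}(t)\}$ of $\Omega^{in}_t$: it is a particle path, not a fixed surface, so each integration by parts above produces a trace integral over $\{r=r_{x_2}(t)\}$, and differentiating $\int_{\Omega^{in}_t}(\cdots)\,d\mathbf{x}$ in $t$ contributes a Reynolds transport term $\int_{\partial\Omega^{in}_t}(\uu\cdot n)(\cdots)\,dS$. Since $x_2\in(x_0,x_1)$, every such boundary integral involves traces of $(\rho,u,u_x,u_{xx},u_\tau,\ldots)$ at the Lagrangian point $x=x_2$, which are already controlled on $[0,T_3]\supset[0,T_4]$ by the boundary-regularity Lemmas~\ref{boundary.a}--\ref{boundary.c}; pulling these back through the change of variables \ef{lagrange}, which is bi-Lipschitz near $x=x_2$ because $\rho>0$ there, bounds every boundary contribution by a constant depending only on the data. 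Assembling all the estimates produces a differential inequality $\frac{d}{dt}\mathcal{E}(t)\le C(1+\mathcal{E}(t))^{p}$ for $\mathcal{E}(t)=\|(\rho,\uu)(t)\|^2_{H^3(\Omega^{in}_t)}$ with $C,p$ depending only on the data; choosing $T_4\in(0,T_3]$ small enough gives $\mathcal{E}(t)\le M$ on $[0,T_4]$, which closes the bootstrap and proves \ef{lm3.9p}. Finally \ef{lm3.9pd} follows by applying $H^2(\Omega^{in}_t)\hookrightarrow L^\infty$ to $\nabla\uu$ and to $\nabla\rho$, and \ef{rhobdda} has already been established.
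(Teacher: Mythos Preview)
Your overall architecture --- bootstrap on $\|(\rho,\uu)\|_{H^3(\Omega^{in}_t)}$, density control from the continuity equation along particle paths, an $H^3$ energy hierarchy, and a Gronwall closure on a short time --- matches the paper's. The substantive discrepancy is how you treat the outer boundary $\{r=r_{x_2}(t)\}$. You propose to keep the integration by parts on $\Omega^{in}_t$ and absorb the resulting surface integrals using the traces furnished by Lemmas~\ref{boundary.a}--\ref{boundary.c} at the Lagrangian point $x=x_2$. This is where a genuine gap appears: at the top order $|\alpha|=3$, integrating the viscous term $\partial^\alpha(\rho^{-1}{\rm div}(\rho^\theta\nabla\uu))$ against $\partial^\alpha\uu$ produces a boundary integrand of the type $\rho^{\theta-1}\,\partial_n\partial^\alpha\uu\cdot\partial^\alpha\uu$, i.e.\ a fourth spatial derivative of $\uu$ paired with a third derivative on $\partial\Omega^{in}_t$. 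Lemma~\ref{boundary.a} only yields $u_{xxxx}\in L^2([x_0,x_1]\times[0,T_2])$, which does not give a trace at $x=x_2$; the same obstruction recurs if you run the hierarchy through $\uu_{tt}$ and elliptic regularity, since the relevant boundary pairings then involve $u_{\tau\tau}$ or $u_{xx\tau}$, for which again only $L^2_{t,x}$ control is available. So the claim that ``every such boundary integral'' is covered by the boundary lemmas is not justified at the highest order.

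The paper sidesteps this entirely by testing not on $\Omega^{in}_t$ but against $\phi(x(r,t))\,\partial^\alpha\uu$, where $\phi=\psi^2$ is a smooth cutoff in the \emph{Lagrangian} variable with $\psi\equiv 1$ on $[0,x_2]$ and ${\rm supp}\,\psi\subset[0,(1-\eta)x_1]$. Because $\phi$ is transported ($\phi_t+u\phi_r=0$), no Reynolds boundary term survives, and every integration by parts deposits the extra derivative on $\phi$ rather than on $\partial\Omega^{in}_t$. The resulting $\phi_x$--terms are volume integrals supported in the annulus $x\in[x_2,(1-\eta)x_1]$, where Lemma~\ref{boundary.a} gives precisely the $L^2$ bounds on $(\rho_{xxx},u_{xxx},u_{x\tau})$ and $\int_0^\tau\|(u_{xxxx},u_{xxs})\|_{L^2}^2$ needed to close. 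A second, more minor, difference: the paper differentiates $\log\rho$ rather than $\rho$ and couples the density and velocity estimates through the identity for $\int\phi\rho^{\gamma-1}\partial^\alpha\uu\cdot\nabla\partial^\alpha\log\rho$, which packages the pressure term into $\tfrac{d}{dt}\int\phi\rho^{\gamma-1}|\partial^\alpha\log\rho|^2$ and leads directly to a single quadratic Gronwall inequality; your separate transport/parabolic treatment is workable in principle but would still need the cutoff device to avoid the trace problem above.
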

\begin{proof}
We first choose $T_{4,a}\le T_3$ to be small and assume that it holds
 \be
 \|\nabla\uu(t)\|_{L^\infty(\Omega^{in}_t)}\le 2M_0,
 \quad
 \|\nabla\rho(t)\|_{L^\infty(\Omega^{in}_t)}\le 2M_2,
  \quad t\in(0,T_{4,a}],                         \label{rho2}
\ee
with
\be
M_0= C_0M^{1/2}, \label{M_0}
\ee
 and
$M_2=C_0M^{1/2}$. It follows from
\ef{rho2} and \ef{4.9ba}
\be
 \|u(t)\|_{L^\infty([0,r_{x_2}(t)])}
 = \|\mathbf{U}(t)\|_{L^\infty([0,r_{x_2}(t)])}
 \le
 2a_0\|\nabla\mathbf{U}(t)\|_{L^\infty([0,r_{x_2}(t)])}
 =4a_0M_0,
\label{vm_0y}
\ee
which together with \ef{M0,a}, \ef{boundary.c.3}, \ef{rho.+.a} and
\ef{rhoport} (or $\ef{2.1o}_1$) yields
\bgr
  |u(r,t)| \le M_1,
  \quad  (r,t)\in [0,a(t)]\times[0,T_{4,a}],   \label{4.9be}
\\
 \frac12\rho_0(r)\le \rho(r,t)\le 2 \rho_0(r),
 \quad  (r,t)\in [0,a(t)]\times[0,T_{4,a}],  \label{rhobdd}
 \egr
with $M_1=:\max\{M_{1,b},\ 4a_0M_0\}$.

Corresponding to \ef{3.4}, we have the basic energy estimates
\be
 \int_{0}^{a(t)}
  ( \rho \,|\mathbf{U}|^2 + \rho^{\gamma})d\mathbf{x}
  +\int_0^t\int_{0}^{a(t)}
     (|\rho^{\frac{\theta}{2}}\nabla\mathbf{U}|^2
     +|\rho^{\frac{\theta}{2}}{\rm div}\mathbf{U}|^2)d\mathbf{x}ds
      \leq E_0.   \label{E3}
\ee
Take derivative $\partial^\alpha $ with $1\le |\alpha|\le 3$ to
\ef{2.1o} to get
\bgr
  (\partial^\alpha\log\rho)_t + \uu\cdot\nabla\partial^\alpha\log\rho
 +\nabla\cdot \partial^\alpha\uu(\mathbf{x}(t),t) = g_\alpha,    \label{um01}
\\
  \partial^\alpha \uu_t +\uu\cdot\nabla\partial^\alpha\uu
 +\gamma\rho^{\gamma-1}\nabla\partial^\alpha\log\rho
  =   h_\alpha+ f_\alpha +k_\alpha,   \label{um0}
\egr
where
 \bgr
 g_\alpha
 = -\partial^\alpha(\uu\cdot\nabla\log\rho)
     +\uu\cdot\nabla\partial^\alpha\log\rho, \quad
 h_\alpha
 =\partial^\alpha(\rho^{-1}{\rm div} (\rho^\theta\nabla\uu)+(\theta-1)\rho^{-1}\nabla(\rho^\theta{\rm div}\uu)),
\nnm\\
 f_\alpha
 =-(\partial^\alpha(\uu\cdot\nabla\uu)-\uu\cdot\nabla\partial^\alpha\uu),\quad
 k_\alpha
 =-\gamma(\partial^\alpha(\rho^{\gamma-1}\nabla\log\rho
          -\rho^{\gamma-1}\nabla\partial^\alpha\log\rho).  \nnm
\egr
Take inner product of \ef{um0} and $\phi(x(r,t))\partial^\alpha \uu$
over $[0,a(t)]\times[0,t]$, where $\phi=\psi^2(x)$ and $\psi\in C_0^{\infty}([0,1])$ satisfies $0\leq \psi(y) \leq1$, $\psi(y) = 1 $ for $y \in [0,x_2]$, and $\psi(y) = 0$ for $y \in [(1-\eta)x_1,1]$ with $\eta > 0$ small enough so that $(1-\eta)x_1 > x_2$, and use the facts that $\phi_t = \phi_x\rho r^{N-1}u, \phi_r = \phi_x\rho r^{N-1}$, make use
of \ef{um01} and the relation
\[
\begin{aligned}
 \mbox{$
 \int \phi\rho^{\gamma-1}\partial^\alpha\uu\cdot\nabla\partial^\alpha\log\rho$} =&
 \mbox{$\frac12\frac{d}{dt}\int\phi\rho^{\gamma-1}|\partial^\alpha\log\rho|^2
 -\frac12\int [(\phi\rho^{\gamma-1})_t + \nabla\cdot(\phi\rho^{\gamma-1}\uu)]
    |\partial^\alpha\log\rho|^2$}
\nnm\\
 &\mbox{$-\int\phi g_\alpha\rho^{\gamma-1}\partial^\alpha\log\rho
  -\int \partial^\alpha\log\rho\partial^\alpha
   \uu\cdot\nabla(\rho^{\gamma-1}\phi)$},
\end{aligned}
\]
we have
\bma
   &\int_0^{a(t)}\phi(\frac12|\partial^\alpha \uu|^2
   +\frac{\gamma}{2}\rho^{\gamma-1}
   |\partial^\alpha\log\rho|^2)r^{N-1}dr
\nnm\\
    =&\int_0^{a(0)}\phi(\frac12|\partial^\alpha\uu|^2
      +\frac{\gamma}{2}\rho^{\gamma-1}
       |\partial^\alpha\log\rho|^2)r^{N-1}(r,0)dr
\nnm\\
        &\, -\frac12\int_0^t\int_0^{a(s)}\phi_s
         |\partial^\alpha\uu|^2r^{N-1}drds
         -\int_0^t\int_0^{a(s)}\phi\partial^\alpha\uu
     \cdot(\uu\cdot\nabla\partial^\alpha\uu)r^{N-1}drds
\nnm\\
    &\,+\frac{\gamma}{2}\int_0^t\int_0^{a(s)}[(\phi\rho^{\gamma-1})_s
       +\nabla\cdot(\phi\rho^{\gamma-1}\uu)]
        |\partial^\alpha\log\rho|^2r^{N-1}drds
\nnm\\
       &\, +\gamma\int_0^t\int_0^{a(s)}\partial^\alpha
        \log\rho\partial^\alpha\uu\cdot\nabla
        (\phi\rho^{\gamma-1})r^{N-1}drds
\nnm\\
    &\,+\int_0^t\int_0^{a(s)}\phi\partial^\alpha\uu
      (h_\alpha+f_\alpha+k_\alpha)r^{N-1}drds
\nnm\\
    &\,+\gamma\int_0^t\int_0^{a(s)}\phi
       r^{N-1} g_\alpha\rho^{\gamma-1}
        \partial^\alpha\log\rho drds, \nnm
\ema
after a direct computation that
\bma
   &\int_0^{a(t)}\phi(|\partial^\alpha\uu|^2
    +\rho^{\gamma-1}|\partial^\alpha\log\rho|^2)r^{N-1}dr
\nnm\\
     &\,+\int_0^t\int_0^{a(s)}\phi\rho^{\theta-1}
      (|\nabla\partial^\alpha\uu|^2
       +|\nabla\cdot\partial^\alpha\uu|^2)r^{N-1}drds
\nnm\\
  \leq
    &C\|(\uu_0,\rho_0)\|^2_{H^3([0,r_1])}
     +C_{x_0,x_1}(1+M_0+M_1+M_2+(\rho^*)^{\gamma-1})
      (\delta_4^2+\delta_5^2+\delta_6^2)t
\nnm\\
   &\,+C_{x_0,x_1}(M_0^2+M_1^2+M_2^2+\delta_4^2+\delta_5^2)
      \int_0^t\int_0^{a(s)}\phi(|\partial^\alpha \uu|^2
       +\rho^{\gamma-1}|\partial^\alpha\log\rho|^2)r^{N-1}drds
\nnm\\
   &\,+C(1+M_0+M_2+\delta_4+\delta_5)\int_0^t
     (\sum_{|\alpha|=1}^3\int_0^{a(s)}\phi (|\partial^\alpha\uu|^2
     +\rho^{\gamma-1}|\partial^\alpha\log\rho|^2)r^{N-1}dr)^2ds, \nnm
\ema
which implies
\bma
 & Y(t)
  +\sum_{|\alpha|=1}^3\int_0^t\int_0^{a(s)}\phi\rho^{\theta-1}
      (|\nabla\partial^\alpha\uu|^2
       +|\nabla\cdot\partial^\alpha\uu|^2)r^{N-1}drds
       \nnm\\
\le&
   K_6+K_7t+K_8\int_0^tY^2(s)ds, \label{um08}
\ema
where
\be
 K_6= C\|(\uu_0,\rho_0)\|^2_{H^3([0,r_1])},\nnm
\ee
\be
 K_8= C_{x_0,x_1}(M_0^2+M_1^2+M_2^2+\delta_4^2+\delta_5^2)
      +C(1+M_0+M_2+\delta_4+\delta_5), \nnm
\ee
\be
 K_7= C_{x_0,x_1}(1+M_0+M_1+M_2+(\rho^*)^{\gamma-1})
      (\delta_4^2+\delta_5^2+\delta_6^2)
      +C_{x_0,x_1}(M_0^2+M_1^2+M_2^2+\delta_4^2+\delta_5^2), \nnm
\ee
$$
 T_{4,b}=:\min\{K_7^{-1}C\|(\uu_0,\rho_0)\|^2_{H^3([0,r_1])},\ T_{4,a}\, \},\nnm
$$
 and
\[
 Y(t)=\int_0^{a(t)}\phi
    ( |(\partial \uu,\partial^2 \uu,\partial^3 \uu)|^2
      +
      |(\partial\log\rho,\partial^2\log\rho,\partial^3\log\rho|^2)(r,t)r^{N-1}dr.
\]
We apply Gronwall's inequality to have
\bma
 & \sum_{|\alpha|=1}^3\int_0^{a(t)}\phi
    ( |\partial^\alpha \uu|^2 +|\partial^\alpha\log\rho|^2 )r^{N-1}dr
 \nnm\\
   &\,+\sum_{|\alpha|=1}^3\int_0^t\int_0^{a(s)}\phi\rho^{\theta-1}
      (|\nabla\partial^\alpha\uu|^2
       +|\nabla\cdot\partial^\alpha\uu|^2)r^{N-1}drds
\nnm\\
  \le\,
  &  4C\|(\uu_0,\rho_0)\|^2_{H^3([0,r_1])},\quad   t\in[0,T_4],   \label{um10}
\ema
for $T_4=:\min\{\, T_{4, b},\, T_{4,c}\,\}$ with
$T_{4,c}=(4K_6K_8)^{-1}$. The \ef{um10} together with \ef{rhobdd}, Lemma~\ref{boundary.a} and
$$
  \|\rho\|_{H^3(\Omega_t^{in})}^2
\le
  C_{x_0,x_1}(\|\log\rho\|_{H^3(\Omega_t^{in})}^2)^3,
$$ leads to \ef{lm3.9p} with $M$ given by
\be
 M=C_{x_0,x_1}(4C\|(\uu_0,\rho_0)\|^2_{H^3(\Omega^{in}_+)}
   )^3.    \label{Mz}
\ee
The estimates \ef{rho2} and \ef{lm3.9pd} follow respectively from
\ef{lm3.9p} and the Sobolev embedding theorem, and \ef{rhobdda}
follows from \ef{rhoport} and \ef{lm3.9p}.
\end{proof}

%------------------------------------------------------------------
\section{Proof of the main results}
\begin{proposition}[\textbf{Existence and Uniqueness}]
\label{approx.euler} Under the assumptions of
Theorem~\ref{thm.existence}, there exists a time $T_*>0$ dependent
of initial data, so that the FBVP \eqref{2.1o} and \eqref{2.1a}
admits a unique solution
$$
\mbox{$(\rho,\rho\mathbf{U},a)(\mathbf{x},t) =
 (\rho(r,t),\rho u(r,t)\frac{\mathbf{x}}{r},a(t))$},\
  r=|\mathbf{x}|,\  (r,t)\in [0,a(t)]\times[0,T_*],
 $$
which satisfies
\bgr
 \|\nabla \mathbf{U}\|_{L^\infty([0,r_{x_2}(t)]\times[0,T_*])}\le M_0,
 \quad
 \|\rho r^{N-1}u_x\|_{L^\infty([x_0,1]\times[0,T_*])}\le M_0, \label{vm_0a}
\\
 \mbox{$\frac12$}\rho_0(x)\le \rho(x,\tau)\le 2\rho_0(x),
    \quad (x,\tau)\in[0,1]\times[0,T_*],     \label{rho_betab}
\egr
for constant $M_0>0$ dependent of initial data, $\rho_+=2\rho^*$ and $\rho_-=\frac12\rho_*$. In addition,
$(\rho,\rho\mathbf{U},a)$ satisfies the
inner regularities in Euler coordinates
\bgr
 \int_{0}^{a(t)}
  ( \rho \,|\mathbf{U}|^2 + \rho^{\gamma})d\mathbf{x}
  +\int_0^t\int_{0}^{a(t)}
     \rho^{\theta}|\nabla\mathbf{U}|^2d\mathbf{x}ds
     \leq C,
   \ \ t \in [0,T_*],             \label{pro3.1a}
\\
 \|(\rho,\mathbf{U})(t)\|^2_{H^3(\Omega^{in}_t)}
 + \sum_{|\alpha|=1}^3
   \int_0^t\int_{\Omega^{in}_s}
  (  |\partial^{\alpha} \rho |^2
    +|\partial^{\alpha+1} \mathbf{U}|^2)d\mathbf{x}ds
  \le C_{in},
   \ \ t \in [0,T_*],    \label{pro3.1b}
  \egr
where $\Omega^{in}_t=\{0\le |\mathbf{x}|\le r_{x_1}(t)\}$,
$r_{x_1}(t)$ is defined by $r'_{x_1}(t)=u(r_{x_1}(t),t)$ with
$r_{x_1}(0)=r_1\in(r_2,a_0)$ and $x_1=1- \int_{r_1}^{a_0}\rho_0
r^{N-1}dr\in(x_2,1)$, and $C>0$ and $C_{in}>0$ are constants, and the
boundary regularities in Lagrange coordinates
\bgr
   \int_{x_2}^1u^{2k}dx+\int_0^{\tau}\int_{x_2}^1\rho^{\theta+1}u^{2k-2}r^{2N-2}u_x^2
   dx ds
   \leq C_{b},
\ \ \tau \in [0,T_*],  \label{pro3.1c}
  \\
   \int_{x_2}^1u_{\tau}^2dx+\int_0^{\tau}\int_{x_2}^1(\rho^{\theta+1}r^{2N-2}u_{xs}^2
  + \rho^{\theta-1}r^{-2}u_{s}^2)dx ds
   \leq C_{b},
\ \ \tau \in [0,T_*],  \label{pro3.1d}
\egr
with the integer $1 \leq k \leq 2m$, and $C_{b}>0$ is a constant.
\end{proposition}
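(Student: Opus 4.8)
The plan is to construct the solution via an iteration/approximation scheme and then pass to the limit, with all the needed a priori bounds already supplied by Lemmas \ref{lm3.1}--\ref{uniform.m}. First I would fix a sufficiently smooth approximation of the initial data (strictly positive density bounded below away from the free boundary, the correct power-law behavior $\rho_0\sim(a_0-r)^\sigma$ near $r=a_0$, and compatibility conditions at $r=0$ and $r=1$), and set up a linearized iteration in Lagrangian coordinates on the fixed domain $[0,1]\times[0,T]$: given $(\rho^{(n)},u^{(n)},a^{(n)})$, solve the mass equation $\rho_\tau+ \rho^2(r^{N-1}u^{(n)})_x=0$ explicitly for $\rho^{(n+1)}$ along the characteristics determined by $r^{(n)}_\tau=u^{(n)}$, and then solve the (now linear, uniformly parabolic on compact subsets of $[0,1)$) momentum equation \eqref{3.2}$_2$ for $u^{(n+1)}$ with boundary data \eqref{3.3}. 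The degeneracy at $x=1$ is handled exactly as in the a priori estimates: the density-weighted energy \eqref{3.4}, the BD-type entropy \eqref{3.40}, and Lemma~\ref{lm3.4} keep $\rho r^{N-1}u_x$ and $u$ bounded (hypotheses \eqref{rho.pm}--\eqref{m.ka}), so the iteration stays inside the class where Lemmas~\ref{boundary.a}--\ref{boundary.c} and Lemma~\ref{uniform.m} apply.

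The key steps, in order: (i) show the iteration map is well defined and the sequence $(\rho^{(n)},u^{(n)},a^{(n)})$ satisfies, uniformly in $n$, the bounds \eqref{4.9ba}--\eqref{rho.+.a} (density trapped between $\tfrac12\rho_0$ and $2\rho_0$), the boundary regularities \eqref{boundary.a.1}--\eqref{boundary.a.3}, \eqref{boundary.b.1}--\eqref{boundary.b.4}, \eqref{boundary.c.1}, the interior $H^3$ bound \eqref{lm3.9p}, and the pointwise bounds \eqref{M0,a}, \eqref{boundary.c.3}, \eqref{vm_0y}, on a common time interval $[0,T_*]$ with $T_*=\min\{T_1,T_2,T_3,T_4\}$; crucially all the constants in those lemmas are independent of the iteration parameter because they depend only on the initial data and on the a priori constants $M_0,M_1,M_2$, which are then \emph{closed} (i.e. the bounds reproduce themselves) by choosing $T_*$ small, exactly as in the "provided that" clauses of Lemmas~\ref{lm3.4} and \ref{uniform.m}. (ii) Extract a convergent subsequence: the uniform bounds give strong convergence of $u^{(n)}$ in, say, $C([0,1]\times[0,T_*])\cap L^2(0,T_*;H^1_{loc}[0,1))$ and of $r^{(n)},a^{(n)}$ in $C^1$, hence convergence of the characteristics and of $\rho^{(n)}$ (via the explicit representation \eqref{rhoport}) in $C$ on $[x_0,1]$ and in $H^3$ on interior sets; pass to the limit in \eqref{3.2} to obtain a weak solution in the sense of Definition~\ref{defi}, and translate \eqref{lm3.9p}, \eqref{th2.1b}-type bounds back to Euler coordinates to get \eqref{pro3.1a}--\eqref{pro3.1d}. (iii) Uniqueness: take two solutions with the same data, write the equations in Lagrangian coordinates for the differences $\delta\rho,\delta u,\delta r$, and run a Gronwall estimate in the weighted $L^2$ norm $\int(\delta u)^2+\rho^{\gamma-1}(\delta\log\rho)^2$ used in Lemma~\ref{uniform.m}, using the already-established Lipschitz bounds \eqref{th2.1a2}, $\|u\|_{W^{1,\infty}}\le C$, and the density lower bound away from $x=1$; near $x=1$ the density weights in \eqref{3.4}-\eqref{3.40} control the singular terms.

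The main obstacle I expect is step (i)--(ii) \emph{at the symmetry center $r=0$ together with the free boundary $x=1$ simultaneously}: the Lagrangian formulation is natural near the free surface (it fixes the moving domain and the BD entropy lives there), but it is singular at $r=0$ because $r^{N}=N\int_0^x \rho^{-1}\,dy$ and the factors $r^{1-N}$, $r^{-2}$ in \eqref{3.2re}, \eqref{boundary.b.4.1} blow up as $x\to0$; conversely the Eulerian/Cartesian form \eqref{2.1o} is the right one near $r=0$ (Lemma~\ref{uniform.m}) but does not see the free boundary. The delicate point is therefore the \emph{matching} on the overlap region $[x_0,x_1]$ (equivalently $r\in[r_{x_0},r_{x_1}]$, away from both $0$ and $1$): one must check that the interior $H^3$ estimates from Lemma~\ref{uniform.m} and the boundary estimates from Lemmas~\ref{boundary.a}--\ref{boundary.c} are compatible on this annulus (they are, by construction of the cutoffs $\psi$, whose supports are arranged to overlap), and that the time of existence is the minimum of the four times $T_1,\dots,T_4$, each of which is bounded below purely in terms of the initial data and the closed constants. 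Once the uniform-in-$n$ estimates on $[0,T_*]$ are in hand, the compactness and limit passage are routine, and uniqueness follows from the weighted Gronwall argument; so the real work is verifying that the constants in all the cited lemmas can be chosen consistently and $n$-independently, which is what forces the "provided that / there is a time $T_i$" structure of those lemmas to close.
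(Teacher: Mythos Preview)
Your existence outline matches the paper's approach: it too constructs the solution by an approximation scheme (citing ``the method of difference scheme and compactness arguments as in [3, 8]'') and relies entirely on the uniform a~priori bounds of Sections~3--5, taking $T_*$ as the minimum of the $T_i$ and closing the bootstrap constants $M_0,M_1,M_2$ exactly as you describe. Your discussion of the interior/boundary matching via overlapping cut-offs is precisely how those lemmas are organized.

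The uniqueness argument, however, has a concrete gap. You propose to run Gronwall in the norm $\int(\delta u)^2+\rho^{\gamma-1}(\delta\log\rho)^2$ borrowed from Lemma~\ref{uniform.m}, but that lemma lives in Eulerian coordinates, where $r$ is the independent variable. In Lagrangian coordinates $r=r(x,\tau)$ is an unknown depending on the solution (via $r^N=N\int_0^x\rho^{-1}\,dy$), so the difference in $r$ must enter the Gronwall system with its own weight, not merely be ``mentioned''. The paper works with $(\varrho,\omega,R)=(\rho_1-\rho_2,\,u_1-u_2,\,r_1/r_2-1)$ and the functional
\[
\int_0^1\bigl(\omega^2+\rho_1^{\theta-1}R^2+\rho_1^{\theta-3}\varrho^2\bigr)\,dx,
\]
whose $\theta$-based weights are tuned to the viscosity degeneracy $\rho^{\theta+1}(r^{N-1}u)_x$ in \eqref{3.2}: the dissipation generated by the $\omega$-equation (structured as in Lemma~\ref{lm3.2}) then absorbs the cross-terms coming from the $\varrho$- and $R$-equations. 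Your $\gamma-1$ weight is tailored to the pressure, not the viscosity, and without the separate $R$-component the factors $r_1^{N-1}-r_2^{N-1}$ appearing throughout \eqref{3.2} remain uncontrolled near $x=1$; as written the loop does not close at the free boundary. The remedy is exactly the three-component, $\theta$-weighted functional above.
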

\bigskip

\underline{\it Proof of the Theorem~\ref{thm.existence}}. With the
estimates we have obtained in Sections 3-5, we can apply the method
of difference scheme and compactness arguments as in [3, 8] and
references therein, to prove the existence of weak solutions to the
FBVP \eqref{2.1o} and \eqref{2.1a}, we omit here. Next, we apply the
idea in [3] to prove the uniqueness. Let $(\rho_1,u_1,r_1)$ and
$(\rho_2,u_2,r_2)$ are two solutions to the FBVP
~\eqref{3.2}--\eqref{3.3b}, and denote
$$
(\varrho,\omega,R)(x,\tau)=(\rho_1-\rho_2,u_1-u_2,\frac{r_1}{r_2}-1)(x,\tau).
$$
Based on Proposition~\ref{approx.euler}, we can derive the
following estimates
 \bgr
 0< c\rho_*(\rho^*)^{-1}
 \le \frac{\rho_1(x,\tau)}{\rho_2(x,\tau)}
    + \frac{\rho_2(x,\tau)}{\rho_1(x,\tau)}
     \le C\rho^*(\rho_*)^{-1},  \ \ (x,\tau)\in[0,1]\times[0,T_*],
 \\
 \frac{|u_1|}{r_1}+\frac{|u_2|}{r_2}
  +| \rho_1^{1+\theta}r_1^{N-1}u_{1x}| + |\rho_2^{1+\theta}r_2^{N-1}u_{2x}|\le C,\ \ x\in[0,1],
 \\
 0< C_{x_0}^{-1}(2a_0)^{-1}
  \le
     \frac{r_1(x,\tau)}{r_2(x,\tau)}
   + \frac{r_2(x,\tau)}{r_1(x,\tau)}
  \le
   2C_{x_0}a_0,\ (x,\tau)\in[x_0,1]\times[0,T_*],
 \\
 (\rho_*(\rho^*)^{-1})^{1/N}
  \le  \frac{r_1(x,\tau)}{r_2(x,\tau)}+ \frac{r_2(x,\tau)}{r_1(x,\tau)}
  \le (\rho^*(\rho_*)^{-1})^{1/N},\   (x,\tau)\in[0,x_0]\times[0,T_*],
 \egr
with which, we can show the uniqueness of the solutions. Indeed,
From \ef{3.2} and using Young's inequality, we have
\bma
  \frac{d}{d\tau} \int_0^1 \rho_1^{\theta-1}R^2dx
 =&\int_0^1 (2\rho_1^{\theta-1} R R_\tau
  +(\theta-1)\rho_1^{\theta-2}\rho_{1\tau} R^2)dx
\nnm\\
 =&(1-\theta)\int_0^1\rho_1^\theta(r_1^{N-1}u_1)_xdx
  +2\int_0^1\rho_1^{\theta-1}R(\frac{u_1}{r_2}
  -\frac{r_1u_2}{r_2^2})dx
 \nnm\\
 \le
 &
  \varepsilon\int_0^1 \rho_1^{\theta-1}\frac{\omega^2}{r_1^2}dx
  +C_\varepsilon\int_0^1\rho_1^{\theta-1}R^2dx,\label{7.a}
\ema
and
\bma
  \frac{d}{d\tau} \int_0^1 \rho_1^{\theta-3}\varrho^2dx
  =& 2\int_0^1\rho_1^{\theta-3} \varrho (\rho_{1\tau}-\rho_{2\tau})dx
   +(\theta-3)\int_0^1\rho_1^{\theta-4}\rho_{1\tau} \varrho^2dx
\nnm\\
  =&(3-\theta)\int_0^1 \rho_1^{\theta-2} (r_1^{N-1}u_1)_x \varrho^2dx
   +2\int_0^1 \rho_1^{\theta-3} \varrho
   (\rho_1^2(r_1^{N-1}u_1)_x-\rho_2^2(r_2^{N-1}u_2)_x)dx
\nnm\\
  \le
   &
   \varepsilon\int_0^1 \rho_1^{\theta-1}\frac{\omega^2}{r_1^2}dx
   +\varepsilon\int_0^1 \rho_1^{\theta+1}r_1^{2(N-1)}\omega_x^2
   +C_\varepsilon\int_0^1 \rho_1^{\theta-3}\varrho^2
   +C_\varepsilon\int_0^1\rho_1^{\theta-1}R^2dx,\label{7.b}
\ema
where $\varepsilon>$ is chosen later and $C_{\varepsilon}>0$ a constant.

From the equation $\ef{3.2}_2$ and boundary condition, we get
\bma
  \frac{d}{d\tau} \int_0^1 \frac12 \omega^2dx
   =&\int_0^1 \{-\theta \rho_1^{\theta+1}(r_1^{N-1}u_1)_x (r_1^{N-1}\omega)_x
    -\theta \rho_2^{\theta+1}(r_2^{N-1}u_2)_x (r_2^{N-1}\omega)_x\}dx
\nnm\\
    &+\int_0^1 \{\rho_1^\gamma(r_1^{N-1}\omega)_x-\rho_2^\gamma(r_2^{N-1}\omega)_x\}dx
\nnm\\
    &+(N-1)\int_0^1 \{\rho_1^\theta(r_1^{N-2}u_1\omega)_x-\rho_2^\theta(r_2^{N-2}u_2\omega)_x\}dx.
\nnm
\ema
Using the similar argument as that in Lemma~\ref{lm3.2} and Cauchy-Schwartz inequality, we have
\bma
\frac{d}{d\tau} \int_0^1 \frac12 \omega^2dx
   \le&
   \varepsilon\int_0^1\rho_1^{\theta+1}r_1^{2(N-1)}\omega_x^2dx
   +\varepsilon\int_0^1 \rho_1^{\theta-1}\frac{\omega^2}{r_1^2}dx
 \nnm\\
  &+C_\varepsilon\int_0^1\rho_1^{\theta-3} \varrho^2dx
  +C_\varepsilon\int_0^1 \rho_1^{\theta-1}R^2dx,
  \nnm\\
  &\,
   -\frac12\int_0^1\{(\theta-1+\mbox{$\frac1N$)}\rho_1^{\theta+1}[(r_1^{N-1}\omega)_x]^2
\nnm\\
   &+\mbox{$\frac{N-1}{N}$}\rho_1^{\theta+1}[r_1^{N-1}\omega_x
   -\frac{\omega}{r_1\rho_1}]^2\}dx,
\label{7.c}
\ema
where $\varepsilon>0$ small enough and $C_{\varepsilon}>0$ is a constant.

Apply the Gronwall's inequality to the summation
of \ef{7.a}--\ef{7.c}, we can finally obtain
\bma
 &\int_0^1( w^2 + \rho_1^{\theta-1}R^2  + \rho_1^{\theta-3}\varrho^2 )(x,\tau) dx
 \nnm\\
 \le&
 C \int_0^1 ( w^2 +\rho_1^{\theta-1}R^2  + \rho_1^{\theta-3}\varrho^2)(x,0) dx=0, \quad(x,\tau)\in[0,1]\times[0,T_*],\nnm
\ema
which implies $(\rho_1,u_1,r_1)=(\rho_2,u_2,r_2)$.
\bigskip\\
{\bf Acknowledgments}\  \  The author would like to thank the referee for the helpful comments and suggestions on the paper.   \par
The author is grateful to Professor Hai-Liang Li, his supervisor,
on the discussion and suggestions about the problem. The research of
J.Liu is partially supported by the NNSFC No. 10871134 and  the
AHRDIHL Project of Beijing Municipality No.PHR201006107.

\bibliographystyle{unsrtnat}

%
%\begin{CJK*}
\end{document}